\def\NAT@def@citea{\def\@citea{\NAT@separator}}
\newcommand{\abs}[1]{\left\vert#1\right\vert}
\newtheorem{theorem}{Theorem}[section]
\newtheorem{lemma}[theorem]{Lemma}
\theoremstyle{definition}
\theoremstyle{remark}
\begin{document}
\noindent{\bf  Statistical inference for fractional diffusion process with random effects at discrete observations}\\ \ \\ 
\small{{\bf El Omari Mohamed \textsuperscript{\textbf{a}}, Hamid El Maroufy\textsuperscript{a},
Christiane Fuchs\textsuperscript{\textbf{bcd}}}
\\ \ \\ 
\textsuperscript{a}Department of  Applied Mathematics, Faculty of Sciences and Technics, Sultan Mouly Slimane University, Morocco; \textsuperscript{\textbf{b}}Bielefeld University, Faculty of Business Administration and Economics, Bielefield, Germany; \textsuperscript{\textbf{c}}Helmholtz Zentrum Munchen, German Research Center for Environmental Health GmbH, Institute of Computational Biology, Neuherberg, Germany; \textsuperscript{\textbf{d}}Chair of Mathematical Modeling of Biology Systems, Technisch Universit{\"a}t M{\"u}nchen,   Garching, Germany.}
\begin{abstract}
This paper deals with the problem of inference associated with linear fractional diffusion process with random effects in the drift. In particular we are concerned  with the maximum likelihood estimators (MLE)  of the random effect parameters. First of all, we estimate the Hurst parameter $H\in (0,1)$ from one single subject. Second, assuming the Hurst index $H\in (0,1)$ is known, we derive the MLE and examine their asymptotic behavior as the number of subjects under study becomes large, with random effects normally distributed.
\paragraph*{Keywords}Asymptotic normality, Fractional Brownian motion, Long-range memory process,  Random effect model, Strong consistency.
 \paragraph*{Mathematics Subject Classification} 62F12 · 60G22
\end{abstract}
\section{Introduction}\label{Sec.1}
Parameteric and nonparameteric estimation in the context of random effects models has been recently investigated by many authors (e.g. \cite{Antic et  al 2009,Comte and Samson 2012,Delattre et al 2012,Ditlevsen and De Gaetano 2005a,Nie 2007,Picchini and Ditlevsen 2011}). In these models, the noise is represented by a Brownian motion characterized  by independence property of its increments. Such a property is not valid for long-memory phenomena arising in a variety of different scientific fields, including hydrology \cite{McLeod and hipel 1978}, biology \cite{Collins and De Luca 1994}, medicine \cite{Kuklinski et al 1989}, economics \cite{Granger 1966} or traffic network \cite{Willinger et al 1995}. As a result self-similar processes have been used to successfully model data exhibiting long-range dependence. Among the simplest models that display long-range dependence, one can consider the fractional Brownian motion (fBm), introduced in the statistics community by Mandelbrot and Van Ness \cite{Mandelbort et al 1968}. A normalized fBm with the Hurst index $H\in (0,1)$ is centered Gaussian process $\displaystyle \left( W^H_t~:~t\geq 0 \right)$ having the covariance
$$\mathbb{E}\left( W^H_s W^H_t\right)=\frac{1}{2}\left(t^{2H}+s^{2H}-\abs{t-s}^{2H} \right).$$
In modeling, the problems of statistical estimation of model parameters are of particular importance, so the growing number of papers devoted to statistical methods for equations with fractional noise is not surprising. We will cite only few of them; further references can be found in \cite{Mishura 2008,Prakasa 2010}. In \cite{Kleptsyna and Le Breton 2002} the authors proposed and studied maximum likelihood estimators for fractional Ornstein-Uhlenbeck process. Related results were obtained by Prakasa \cite{Prakasa  2003}, where a more general model was considered. In \cite{Hu and Nualart 2010} the author proposed a least squares (LS) estimator for fractional Ornstein-Uhlenbeck process and proved its asymptotic normality. Recently, the same results are obtained using the same approach (LS) in \cite{Xiao and Yu 2018}, for the fractional Vasicek model with long-memory. \bigskip
It is worth to mentioning the papers \cite{Hu et al 2011,Tudor and Viens 2007} that deal with the whole range of Hurst parameter $H\in (0,1)$. Meanwhile,  we have cited other   papers that only the case where  $H >1/2$ (which corresponds to long range dependence); recall that in the case $H =1/2$, we get a classical diffusion extensively treated in literature \cite{Liptser and Shiryaev 2001}.\\ \ \\
This paper deals with statistical estimation of population parameters for fractional SDE's with random effects. To our knowledge, this problem has not been yet investigated. Precisely, we concider only a fractional diffusion processes of the form
\begin{equation}\label{Type}
\displaystyle X_t=x+ \int_0^t \left( a(X_s)+\phi b(X_s) \right) ds+W^H_t,
\end{equation}
where $\phi$ is a random variable relying on parameter $\theta$ to be estimated, and $W^H$ is a normalized fBm with Hurst parameter $H$ to be estimated. We study the additive linear case,  $b(x)\equiv 1$, when $\displaystyle \phi\sim\mathcal{N}(\mu,\sigma^2)$. The estimators $\widehat{\mu}$, $\widehat{\sigma^2}$ and  $\widehat{H}$ respectively of $\mu$, $\sigma^2$ and  $H$ respectively  are constructed and their asymptotic behaviors are investigated. The model  (\ref{Type}) is simple  and we can derive explicit estimators, also the model  generalizes the model considered  in \cite{Hu et al 2011}, while the techniques used here to investigate asymptotic properties are elementary  (due to the incorporation of the  random effects, hence we avoid the Malliavin techniques),  which gives as  a first reason to  choose it. The second reason is that (\ref{Type}) is widely applied in various fields. in fact the Vasiceck model is an example of type (\ref{Type}). The third reason is that the estimation of the population parameters requires few observations per subject, which coincide with  several natural phenomena  where the repeated measurements are rarely available if not impossible. Finally let's note that nonparametric estimation has been realized recently by us for similar model\cite{El Omari et al 2019}.\bigskip
The rest of the  paper  is organized as follows. In Section \ref{Sec.2}, we introduce the model and some preliminaries about the likelihood function. In  Section \ref{Sec.3} we derive the parameters estimators and we establish consistency and asymptotic normality. The simulations are presented in Section  \ref{Sec.4}  while  Section \ref{Sec.5} contains some concluding remarks and gives directions of further research.\\
Throughout the paper the notations  $\Longrightarrow$, $\stackrel{\mathbb{P}-as}{\Longrightarrow}{}$ and $\stackrel{\mathcal{D}}{\Longrightarrow}{}$ mean, respectively, simple convergence,  convergence almost surely with respect to the probability measure $\mathbb{P}$ and  convergence in distribution.
\section{Model and Preliminary results}\label{Sec.2}
Before introducing our estimation techniques, we first state some basic facts about
fractional Brownian motions and likelihood function.
Let $\displaystyle\left(\Omega,\mathcal{F},(\mathcal{F}_t^i),\mathbb{P}\right)$ be a stochastic basis satisfying the usual conditions. The natural filtration of a stochastic process is understood as the $\mathbb{P}$-completion of the filtration generated by this process. Let $\displaystyle W^{H,i}=\left( W^{H,i}(t)~,~t\leq T :\right)$, $i=1,\cdots,N$ be $N$ independent normalized fractional Brownian motion (fBm) with a common Hurst parameter $\displaystyle H\in (0,1)$. Let $\displaystyle \phi_1,\cdots,\phi_N$ be $N$ independent and identically distributed (i.i.d) $\mathds{R}$-valued random variables on the common probability space $\displaystyle\left(\Omega,\mathcal{F},\mathbb{P} \right)$ independent of  $\displaystyle \left( W^{H,1},\cdots,W^{H,N}\right)$. Consider $N$ subjects $\displaystyle \left(X^i(t),\mathcal{F}_t^i,t\leq T \right)$ with dynamics ruled by the following general linear stochastic differential equations:
\begin{eqnarray}\label{Eq1}
\displaystyle dX^i(t) &=& \left(a(X^i(t))+\phi_i b(X^i(t)) \right)dt+dW^{H,i}(t)\\
\displaystyle \nonumber  X^i(0) &=& x^i\in \mathds{R},~~i=1,\cdots,N,
\end{eqnarray}
where $a(\cdot)$ and $b(\cdot)$ are supposed to be known in their own spaces. Let the random effects $\phi_i$ be $ \mathcal{F}_0^i$-measurable with common density $g(\varphi,\theta)d\nu( \varphi)$, where $\nu$ is some dominating measure on $\mathds{R}$ and $\theta$ is unknown parameter. Set $\theta\in U$, where $U$ is an open set in $\mathds{R}^d$. Sufficient conditions for the existence and uniqueness of solutions to (\ref{Eq1}) can be found in \cite[p. 197]{Mishura 2008} and references therein.\bigskip
Let $C_T$ denote the space of real continuous functions $\displaystyle \left( x(t)~:~t\in [0,T] \right)$ defined on $[0,T]$ endowed with $\sigma$-field $\mathcal{B}_T$. The $\sigma$-field $\mathcal{B}_T$ is associated with the topology of uniform convergence on $[0,T]$. We introduce the distribution $\displaystyle \mu_{X^i_{\varphi,H}}$ on $\displaystyle \left(C_T,\mathcal{B}_T \right)$ of the process $\left( X^i|\phi_i=\varphi\right)$. On $\mathds{R}\times C_T$, $\displaystyle Q^i_{\theta,H}=g(\varphi,\theta)d\nu\otimes \mu_{X^i_{\varphi,H}}$ denotes the joint distribution of $(\phi_i,X^i)$. Let $\mathbb{P}^i_{\theta,H}$ be the marginal distribution of $\left(X^i(t)~:~t\leq T \right)$ on $\left(C_T,\mathcal{B}_T \right)$. Since the subjects are independent (this is inherited from the independence of $\phi_i$ and $ W^{H,i}$), the distribution of the whole sample $\left(X^i(t)~:~t\leq T,~i=1,\cdots,N \right)$ on $C_T^{\otimes N}$ is defined by $\displaystyle\mathbb{P}_{\theta,H}=\otimes_{i=1}^{N}\mathbb{P}^i_{\theta,H}$. Thus the likelihood can be defined as
\begin{equation*}
\displaystyle \Lambda(\theta,H) = \frac{\mbox{d} \mathbb{P}_{\theta,H}}{\mbox{d} \mathbb{P}} = \prod_{i=1}^{N} \frac{\mbox{d} \mathbb{P}^i_{\theta,H}}{\mbox{d}\mathbb{P}^i} ,
\end{equation*}where $\mathbb{P}=\otimes_{i=1}^{N}\mathbb{P}^i$ and $\mathbb{P}^i=\mu_{X^i_{\varphi_0,H}}$, provided that $\displaystyle \mu_{X^i_{\varphi,H}}\ll\mu_{X^i_{\varphi_0,H}}$ for some fixed $\varphi_0\in\mathds{R}$. It is well known that $\displaystyle \mu_{X^i_{\varphi,H}}$ coincides with the distribution of the process $X^{i,\varphi}$ defined by:
\begin{equation*}
\displaystyle  dX^{i,\varphi}(t) = \left(a(X^i(t))+\varphi b(X^{i,\varphi}(t)) \right)dt+dW^{H,i}(t),~ X^{i,\varphi}(0)=x^i,
\end{equation*}when $H=1/2$, since in this case the process $(X^i,\phi_i)$ is markovian (e.g. \cite{Catalot and Laredo 2013}); hence, the Girsanov formula can be applied to get the derivative $\displaystyle \frac{d\mu_{X^i_{\varphi,H}}}{d\mu_{X^i_{\varphi_0,H}}}$. When $H\neq 1/2$, the non Markovian property of the coupled process $(X^i,\phi_i)$ makes the construction of the likelihood very difficult. But in our case, the process $X^i$ is transformed into a $Y^i$ for which the law of $\displaystyle \left( Y^i | \phi_i=\varphi\right)$ coincides with the distribution of a parametric fractional diffusion process $Y^{i,\varphi}$.
\section{Construction of estimators and their asymptotic properties}\label{Sec.3}
Consider the following process
\begin{eqnarray}\label{PSY}
 \displaystyle Y^i(t) &:=& X^i(t)-x^i-\int_0^t a(X^i(s))ds ,~~~t\geq 0\\
\displaystyle &=& t\phi_i+W^{H,i}(t)\sim  \mathcal{N}\left(t\mu,t^2\sigma^2+t^{2H} \right),~~~t\geq 0\label{PSYbis}.
\end{eqnarray}
Since $\phi_i$ and $W^{H,i}$ are independent  the process $\displaystyle \left( Y^i(t)~:~t\in [0,T]\right)$ is a Gaussian process. Furthermore, for each $\varphi\in\mathds{R}$, we have $\mathbb{E}\left(Y^i(t) |\phi_i=\varphi\right)=t\varphi$ and $\mathbb{C}ov\left(Y^i(t),Y^i(s) |\phi_i=\varphi\right)=\frac{1}{2}(t^{2H}+s^{2H}-\abs{t-s}^{2H})$. For each subject $Y^i$, we consider $n$ observations $Y^i:=\left(Y^i(t_1),\cdots,Y^i(t_n) \right)'$ where $0=t_0<t_1<\cdots<t_n=T$ is a subdivision of $[0,T]$. The density of $Y^i$ given $\phi_i=\varphi$ is expressed as
\begin{equation*}
\displaystyle \Pi ( Y^i|\phi_i=\varphi,H ) = \frac{1}{\sqrt{(2\pi)^n \mbox{det}V(H)}}\exp\left(-\frac{1}{2}(Y^i-\varphi u)'V^{-1}(H)(Y^i-\varphi u)\right),
\end{equation*}where $u=(t_1,\cdots,t_n)'$ and $(V(H))_{k,l}=\mathbb{C}ov\left(Y^i(t_k),Y^i(t_l) |\phi_i=\varphi\right)$ is the common covariance matrix of the subjects $Y^i$, $i=1,\cdots,N$. The log-likelihood of the whole sample $\left( Y^1,\cdots,Y^N\right)$ is defined as
\begin{equation}\label{Eq2}
\displaystyle l(\theta,H)=\sum_{i=1}^{N} \log\int\Pi ( Y^i|\phi_i=\varphi,H )  g(\varphi,\theta) d\nu(\varphi).
\end{equation}
For a specific distribution (say $ g(\varphi,\theta) d\nu(\varphi)=\mathcal{N}(\mu,\sigma^2)$, we can solve the integrals given in (\ref{Eq2}). Indeed,
\begingroup\small
\begin{eqnarray}\label{Eq2.1}
\nonumber \int\Pi ( Y^i|\phi_i=\varphi,H )  g(\varphi,\theta) d\nu(\varphi)\hspace*{-0.3cm} & = & \hspace*{-0.3cm}
(2\pi)^{\frac{-n}{2}}\sigma^{-1} \mbox{det}(V(H))^{\frac{-1}{2}}
\left( {u}' V^{-1}(H) u+1/\sigma^2\right)^{\frac{-1}{2}} \\
  &~&  \hspace*{-3cm}\times\exp\left[-\frac{1}{2}\left( \mu^2/\sigma^2 + {Y^i}' V^{-1}(H) Y^i -\frac{({u}' V^{-1}(H)  Y^i+\mu/\sigma^2)^2}{{u}' V^{-1}(H) u+1/\sigma^2} \right)\right].
\end{eqnarray}
\endgroup
\subsection{Estimation of the Hurst parameter $H$}
Using data induced by one single subject (without loss of generality, say $Y^1$ with $t_j=\frac{j}{n},j=1,\cdots,n,~T=1$), we may construct a class of estimators of the Hurst index $H$. More precisely, for all $k>0$ and for any filter $\gamma=(\gamma_0,\cdots\gamma_l)$ of order $p\geq 2$, that is,
\begin{equation}\label{Filter}
\displaystyle \mbox{for all indices}~ 0\leq r<p;~\sum_{j=0}^{l} j^r \gamma_j=0~\mbox{ and }~\sum_{j=0}^{l} j^p \gamma_j\neq 0.
\end{equation}
Let consider the following arguments :
$ \widehat{H}(n,p,k,\gamma,Y^1) = g^{-1}_{n,k,\gamma}\left( S_n(k,\gamma)\right)$,  where $\displaystyle  S_n(k,\gamma) = \frac{1}{n-l}\sum_{i=l}^{n-1}\abs{\sum_{q=0}^{l} \gamma_q Y^1\left(\frac{i-q}{n}\right)}^k$,  $g_{n,k,\gamma}(t)=\frac{1}{n^{tk}}\lbrace\pi_t^\gamma(0)\rbrace^{k/2} E_k$ and   $
 \displaystyle \pi_t^\gamma(j) = -\frac{1}{2}\sum_{q,r}^{l}\gamma_q \gamma_r \abs{q-r+j}^{2t},
~\mbox{with}~ E_k = 2^{k/2}\Gamma(k+1/2)/ \Gamma(1/2)
$,
with  $\Gamma(x)$ is the usual gamma function. For invertibility of the function $g_{n,k,\gamma}(\cdot)$, we refer to \cite[p. 7]{Coeurjolly 2001}.
\begin{theorem}
 The following statements holds true, as the number of observations $n\longrightarrow\infty$,
\begin{description}
\item[\bf{(i)}] $\displaystyle \widehat{H}(n,p,k,\gamma,Y^1)\stackrel{\mathbb{P}-as}{\Longrightarrow}{}H$
\item[\bf{(ii)}] $\displaystyle n^{-1/2}\log(n)\left( \widehat{H}(n,p,k,\gamma,Y^1)-H\right)\stackrel{\mathcal{D}}{\Longrightarrow}{} \mathcal{N}\left(0,\frac{A(H,k,\gamma)}{k^2} \right)$, where
\begin{eqnarray*}
\displaystyle A(t,k,\gamma) &=& \sum_{j\geq 1} (c^k_{2j})^2(2j)!\sum_{i\in\mathbb{Z}}{\rho^\gamma_t(i)}^{2j}, ~\mbox{with}\\
\displaystyle c^k_{2j} &=&\frac{1}{(2j)!}\prod_{q=0}^{j-1}(k-2q),~\mbox{and}~\rho^\gamma_t(i)=\frac{\pi_t^\gamma(i)}{\pi_t^\gamma(0)}.
\end{eqnarray*}
\end{description}
\end{theorem}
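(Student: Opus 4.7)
The plan is to reduce the theorem to known results on filtered variations of a pure fractional Brownian motion, which is the setting studied in detail by Coeurjolly. The only real work is to show that the random linear drift $t\phi_1$ in $Y^1(t)=t\phi_1+W^{H,1}(t)$ disappears from the filtered statistic $S_n(k,\gamma)$, after which we can invoke existing theorems essentially as a black box.

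First I would verify the annihilation of the drift. Since $\gamma=(\gamma_0,\dots,\gamma_l)$ is a filter of order $p\ge 2$, the defining relations $\sum_{q=0}^l \gamma_q=0$ and $\sum_{q=0}^l q\gamma_q=0$ hold. Therefore, for each index $i$,
\begin{equation*}
\sum_{q=0}^l \gamma_q\, Y^1\!\left(\tfrac{i-q}{n}\right)=\frac{\phi_1}{n}\Bigl(i\sum_{q=0}^l\gamma_q-\sum_{q=0}^l q\gamma_q\Bigr)+\sum_{q=0}^l \gamma_q\, W^{H,1}\!\left(\tfrac{i-q}{n}\right)=\sum_{q=0}^l \gamma_q\, W^{H,1}\!\left(\tfrac{i-q}{n}\right),
\end{equation*}
so that $S_n(k,\gamma)$ computed on the observations $Y^1(j/n)$ coincides pathwise with the corresponding filtered $k$-variation of the unperturbed fBm $W^{H,1}$. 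In particular, the estimator $\widehat{H}(n,p,k,\gamma,Y^1)$ is equal, almost surely, to the Coeurjolly estimator based on $(W^{H,1}(j/n))_{j=0,\dots,n}$.

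Next I would identify the normalization $g_{n,k,\gamma}$. A direct Gaussian computation shows that the filtered increment $Z_i^\gamma:=\sum_q \gamma_q W^{H,1}((i-q)/n)$ is centered Gaussian with variance $n^{-2H}\pi_H^\gamma(0)$, and the correlation between $Z_i^\gamma$ and $Z_{i+j}^\gamma$ is precisely $\rho_H^\gamma(j)=\pi_H^\gamma(j)/\pi_H^\gamma(0)$. Hence $\mathbb{E}|Z_i^\gamma|^k=n^{-Hk}(\pi_H^\gamma(0))^{k/2}E_k=g_{n,k,\gamma}(H)$, which justifies the definition of the estimator.

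Finally, strong consistency of $S_n(k,\gamma)$ towards $g_{n,k,\gamma}(H)$ follows by an ergodic/stationary argument on the Gaussian sequence $(Z_i^\gamma)_i$, and asymptotic normality at the rate $\sqrt{n}$ with asymptotic variance $A(H,k,\gamma)$ follows from a Breuer--Major type central limit theorem applied to the Hermite expansion of $|Z_i^\gamma|^k$, the coefficients $c_{2j}^k$ arising from the Hermite expansion of $|x|^k$; both of these are the statements established in Coeurjolly (2001). Transferring these to $\widehat{H}=g_{n,k,\gamma}^{-1}(S_n(k,\gamma))$ is then a routine application of the delta method, where the factor $n^{-1/2}\log n$ and the $k^2$ in the denominator of the variance come from differentiating $g_{n,k,\gamma}(t)=n^{-tk}(\pi_t^\gamma(0))^{k/2}E_k$ at $t=H$. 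The only genuinely non-trivial step is the drift-annihilation identity above; everything else is a citation to Coeurjolly's framework, which is applicable precisely because the filter order condition $p\ge 2$ removes the linear random effect while preserving the fBm structure.
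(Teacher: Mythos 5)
Your proposal is correct and follows essentially the same route as the paper: the filter-order condition $p\ge 2$ (i.e.\ $\sum_q\gamma_q=0$ and $\sum_q q\gamma_q=0$) annihilates the random linear drift $t\phi_1$, so $S_n(k,\gamma)$ computed from $Y^1$ coincides pathwise with the filtered $k$-variation of $W^{H,1}$, and the conclusion is then read off from Coeurjolly's Proposition~2. Your additional remarks on the Hermite expansion, the Breuer--Major theorem and the delta method correctly describe what is inside the cited result but are not needed beyond the citation.
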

\begin{proof} Following Coeurjolly \cite{Coeurjolly 2001}, we set $\displaystyle V^\gamma(i/n)=\sum_{q=0}^{l} \gamma_q W^{H,1}\left( \frac{i-q}{n}\right)$, for $i=l,\cdots,n-1$. From (\ref{Filter}) we see that  the  filter $\gamma$ is of order $p\geq 2$, $\displaystyle \sum_{q=0}^{l}  \frac{i-q}{n} \gamma_q =0$. Therefore, substituting $\displaystyle Y^1\left(\frac{i-q}{n} \right)$ by $\displaystyle\frac{i-q}{n}\phi_1+W^{H,1}\left(\frac{i-q}{n}\right)$, we obtain
\begin{eqnarray*}
\displaystyle S_n(k,\gamma) &=& \frac{1}{n-l}\sum_{i=l}^{n-1}\abs{\sum_{q=0}^{l} \gamma_q Y^1\left(\frac{i-q}{n}\right)}^k\\
\displaystyle &=& \frac{1}{n-l}\sum_{i=l}^{n-1}\abs{\sum_{q=0}^{l} \gamma_q \frac{i-q}{n}\phi_1+\sum_{q=0}^{l} \gamma_q W^{H,1}\left(\frac{i-q}{n}\right)}^k\\
\displaystyle &=& \frac{1}{n-l}\sum_{i=l}^{n-1}\abs{ V^\gamma(i/n)}^k.
\end{eqnarray*}
Hence, our estimators coincide with estimators $\widehat{H}$ based on $k$-variations of the fBm (see \cite[Proposition 2]{Coeurjolly 2001}) and the proof is complete.
\end{proof}
\subsection{Estimation of the population parameter $\theta=(\mu,\sigma^2)$}
Now, assume that $H$ is known. From the log-likelihood given by (\ref{Eq2}) and (\ref{Eq2.1}), we derive an estimator $\widehat{\mu}$ given  by
\begin{equation}
\displaystyle \widehat{\mu} = \frac{\frac{1}{N}\displaystyle\sum_{i=1}^{N} {u}' V^{-1}(H) Y^i}{u' V^{-1}(H) u}.
\end{equation}For the parameter $\sigma^2$ it sounds very difficult to derive an estimator. However, we can construct an alternative estimator and study its asymptotic behavior. Observing that $\widehat{\mu}$ is a sample mean drawn from a sequence of i.i.d random variables, one might think that sample variance could also be used to estimate $\sigma^2$. Unfortunately, simple computations shows that such a sample variance is not consistent. Thus, as an alternative  we propose the following estimator  for $\sigma^2$:
\begingroup\small
\begin{equation}
\displaystyle \widehat{\sigma^2} = \frac{1}{N} \sum_{i=1}^{N} \left( \frac{{u}' V^{-1}(H) Y^i}{u' V^{-1}(H) u}\right)^2-\frac{1}{N^2}\left(\sum_{i=1}^{N}\frac{{u}' V^{-1}(H) Y^i}{u' V^{-1}(H) u} \right)^2-\left(u' V^{-1}(H) u\right)^{-1}.
\end{equation}
\endgroup
\begin{theorem}\,\ \\
The estimator $\widehat{\mu}$ is unbaised, $\widehat{\mu} \stackrel{\mathbb{P}-as}{\Longrightarrow}{}\mu$ and  $\mathbb{V}ar(\widehat{\mu})\longrightarrow 0$ as $N\rightarrow\infty$.
\end{theorem}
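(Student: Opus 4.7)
The plan is to rewrite $\widehat{\mu}$ as a simple empirical mean of i.i.d.\ random variables by exploiting the additive structure of $Y^i$ inherited from (\ref{PSY})--(\ref{PSYbis}).

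First I would view $Y^i$ as the vector $(Y^i(t_1),\ldots,Y^i(t_n))'$ and note that the identity $Y^i(t_k)=t_k\phi_i+W^{H,i}(t_k)$ translates into the vector equation $Y^i=\phi_i u+\widetilde W^{H,i}$, where $\widetilde W^{H,i}=(W^{H,i}(t_1),\ldots,W^{H,i}(t_n))'$ has mean zero and covariance matrix $V(H)$. Substituting this into the definition of $\widehat{\mu}$ gives
\begin{equation*}
\widehat{\mu}=\frac{1}{N}\sum_{i=1}^{N}Z_i,\qquad Z_i:=\phi_i+\frac{u'V^{-1}(H)\widetilde W^{H,i}}{u'V^{-1}(H)u}.
\end{equation*}
Because the $(\phi_i,W^{H,i})$ are i.i.d.\ across $i$ and independent within each $i$, the random variables $Z_i$ are themselves i.i.d.

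Second, I would compute the first two moments of $Z_i$. Since $\mathbb{E}[\widetilde W^{H,i}]=0$ and $\mathbb{E}[\phi_i]=\mu$, linearity of expectation gives $\mathbb{E}[Z_i]=\mu$, hence $\mathbb{E}[\widehat{\mu}]=\mu$, proving unbiasedness. For the variance, independence of $\phi_i$ and $\widetilde W^{H,i}$ yields
\begin{equation*}
\mathbb{V}ar(Z_i)=\sigma^2+\frac{u'V^{-1}(H)\,V(H)\,V^{-1}(H)u}{(u'V^{-1}(H)u)^2}=\sigma^2+\frac{1}{u'V^{-1}(H)u},
\end{equation*}
so that $\mathbb{V}ar(\widehat{\mu})=N^{-1}\bigl(\sigma^2+(u'V^{-1}(H)u)^{-1}\bigr)\longrightarrow 0$ as $N\to\infty$.

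Finally, strong consistency follows immediately from the Kolmogorov strong law of large numbers applied to the i.i.d.\ sequence $(Z_i)_{i\ge 1}$, whose common mean $\mu$ is finite: $\widehat{\mu}=\frac{1}{N}\sum_{i=1}^N Z_i\stackrel{\mathbb{P}-as}{\Longrightarrow}\mu$. There is no genuine obstacle here: the only point requiring a bit of care is verifying that the covariance of the linear functional $u'V^{-1}(H)\widetilde W^{H,i}$ reduces to $u'V^{-1}(H)u$, which is an immediate consequence of $\mathbb{C}ov(\widetilde W^{H,i})=V(H)$ and the symmetry of $V^{-1}(H)$.
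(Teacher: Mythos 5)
Your proof is correct and follows essentially the same route as the paper: the decomposition $Y^i=\phi_i u+\widetilde W^{H,i}$, the identification of $\widehat{\mu}$ as the empirical mean of the i.i.d.\ variables $Z_i$ (the paper's $\xi_i(H)$), the strong law of large numbers for consistency, and the variance computation yielding $\frac{\sigma^2}{N}+\frac{1}{N\,u'V^{-1}(H)u}$. The only cosmetic difference is that you collapse the covariance calculation into the matrix identity $u'V^{-1}(H)V(H)V^{-1}(H)u=u'V^{-1}(H)u$, whereas the paper expands it as a double sum over the fBm covariances before arriving at the same expression.
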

\begin{proof}
Set $\epsilon^i=\left(W^{H,i}(t_1),\cdots,W^{H,i}(t_n)\right)'$. Substituting $Y^i$ by $\phi_i u+\epsilon^i$, we have
$ \displaystyle \widehat{\mu}=\frac{1}{N}\sum_{i=1}^{N}\phi_i +\frac{\frac{1}{N}\displaystyle\sum_{i=1}^{N} {u}' V^{-1}(H) \epsilon^i}{u' V^{-1}(H) u},$
so $\displaystyle \mathbb{E}(\widehat{\mu})=\mu$. For the second statement, we consider the random variables $\xi_i(H)$ defined by
\begin{equation}\label{UQU}
\displaystyle \xi_i(H)=\frac{u' V^{-1}(H) Y^i}{u' V^{-1}(H) u}.
\end{equation}
Clearly, $\xi_i(H)$ are i.i.d random variables with $\mathbb{E}(\xi_i(H))=\mu <\infty$, then by  strong law of large numbers,  $\widehat{\mu}$ converges almost surely to $\mu$ as $N\rightarrow \infty$. Set $z(H):=(z_1(H),\cdots,z_n(H))=u ' V^{-1}(H)$, we have
\begingroup\small
\begin{eqnarray*}
\displaystyle \mathbb{V}ar(\widehat{\mu})\hspace*{-0.2cm}  &=& \hspace*{-0.15cm} \mathbb{V}ar\left(\frac{1}{N}\sum_{i=1}^{N}\phi_i\right)+\frac{1}{N^2(z(H)\cdot u)^2}\mathbb{V}ar\left( \sum_{i=1}^{N} z(H)\cdot \epsilon^i\right) \\
\displaystyle\hspace*{-0.2cm}  &=& \frac{1}{N^2} \mathbb{V}ar\left(\sum_{i=1}^{N}\phi_i\right) \\
\hspace*{-0.2cm} & + & \hspace*{-0.17cm}\frac{1}{N^2(z(H)\cdot u)^2} \sum_{i,j}^{N} \mathbb{E}\left\lbrace\left( \sum_{k=1}^n z_k(H) W^{H,i}(t_k)\right)\left( \sum_{l=1}^n z_l(H) W^{H,j}(t_l)\right)\right\rbrace\\
\displaystyle \hspace*{-0.2cm}  &=&  \hspace*{-0.17cm}\frac{1}{N^2} \sum_{i=1}^{N}\mathbb{V}ar(\phi_i)+\frac{1}{N^2(z(H)\cdot u)^2}\sum_{i,j}^{N}\sum_{k,l}^n z_k(H)z_l(H)\mathbb{E} \left(   W^{H,i}(t_k) W^{H,j}(t_l)\right)\\
\displaystyle ~ \hspace*{-0.2cm}  &=&  \hspace*{-0.17cm}\frac{\sigma^2}{N}+\frac{1}{N^2(z(H)\cdot u)^2} \sum_{i}^{N}\sum_{k,l}^n z_k(H)z_l(H)\mathbb{E}\left(   W^{H,i}(t_k) W^{H,i}(t_l)\right)\\
\displaystyle  \hspace*{-0.2cm}  &=& \hspace*{-0.17cm}  \frac{\sigma^2}{N}+\frac{1}{N^2(z(H)\cdot u)^2} \sum_{i}^{N}\sum_{k,l}^n \frac{1}{2}z_k(H)z_l(H)\left( t_k^{2H}+t_l^{2H}-\abs{t_k-t_l}^{2H}\right)\\
\displaystyle \hspace*{-0.2cm} &=& \hspace*{-0.17cm} \frac{\sigma^2}{N}+\frac{1}{N^2(z(H)\cdot u)^2} N z(H)V(H)z(H)'= \sigma^2+\frac{Nu' V^{-1}(H)V(H)V^{-1}(H) u}{N^2(z(H)\cdot u)^2} \\
\displaystyle  \hspace*{-0.2cm} &=&  \hspace*{-0.17cm} \frac{\sigma^2}{N}+\frac{1}{N u' V^{-1}(H) u}\longrightarrow 0 ~~\mbox{as}~~N\rightarrow\infty.
\end{eqnarray*}
\endgroup
\end{proof}
Before, we establish  the bias of $\widehat{\sigma^2}$ the estimator of $\sigma^2$, we first give the following result:
\begin{lemma}
\begingroup\small
\begin{equation*}
\displaystyle \mathbb{E}(\xi_1(H))^2=\sigma^2+\mu^2+\frac{1}{u ' V^{-1}(H) u}~~\mbox{and}~~ \displaystyle \mathbb{E}\left( \sum_{i=1}^{N}\xi_i(H)\right)^2=N\sigma^2+N^2\mu^2+\frac{N}{u ' V^{-1}(H) u},
\end{equation*}\endgroup where $\xi_i(H)$ are random variables given by (\ref{UQU}).
\end{lemma}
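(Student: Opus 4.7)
The plan is to reduce both expectations to variance computations by exploiting the decomposition $Y^i = \phi_i u + \epsilon^i$ that was already used in the proof of the preceding theorem. Substituting this into the definition of $\xi_i(H)$ gives
$$\xi_i(H) \;=\; \phi_i \;+\; \frac{u'V^{-1}(H)\epsilon^i}{u'V^{-1}(H)u},$$
and since $\phi_i$ is independent of $\epsilon^i$ with $\mathbb{E}(\phi_i)=\mu$ and $\mathbb{V}ar(\phi_i)=\sigma^2$, while $\epsilon^i$ is centred Gaussian, the two summands are uncorrelated and the cross term vanishes. Hence $\mathbb{E}(\xi_i(H))=\mu$ and $\mathbb{V}ar(\xi_i(H)) = \sigma^2 + \mathbb{V}ar\bigl(u'V^{-1}(H)\epsilon^i\bigr)/(u'V^{-1}(H)u)^2$.

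Next I would compute the variance of the linear functional of $\epsilon^i$. Since $\mathbb{C}ov(\epsilon^i) = V(H)$ by the very definition of $V(H)$ in Section~\ref{Sec.2}, one gets
$$\mathbb{V}ar\bigl(u'V^{-1}(H)\epsilon^i\bigr) \;=\; u'V^{-1}(H)\,V(H)\,V^{-1}(H)u \;=\; u'V^{-1}(H)u,$$
which collapses neatly to the denominator of interest. Dividing by $(u'V^{-1}(H)u)^2$ yields $\mathbb{V}ar(\xi_i(H)) = \sigma^2 + 1/(u'V^{-1}(H)u)$, and adding $\mu^2$ gives the first formula $\mathbb{E}(\xi_1(H))^2 = \sigma^2+\mu^2+1/(u'V^{-1}(H)u)$.

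For the second identity, I would use that the variables $\xi_i(H)$, $i=1,\ldots,N$, are i.i.d.\ (inherited from the independence of the pairs $(\phi_i,W^{H,i})$ across $i$), so their variances add and their means add linearly. Therefore
$$\mathbb{E}\Bigl(\sum_{i=1}^N \xi_i(H)\Bigr)^2 = \mathbb{V}ar\Bigl(\sum_{i=1}^N \xi_i(H)\Bigr) + \Bigl(\sum_{i=1}^N\mathbb{E}\xi_i(H)\Bigr)^2 = N\sigma^2 + \frac{N}{u'V^{-1}(H)u} + N^2\mu^2,$$
which is exactly the claimed expression. The only mildly delicate step is the matrix simplification $V^{-1}V V^{-1}=V^{-1}$ in the second paragraph; everything else is a routine mean/variance decomposition, so I do not anticipate any real obstacle beyond keeping track of this cancellation (which, incidentally, already appears implicitly in the variance computation at the end of the previous proof).
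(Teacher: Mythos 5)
Your proof is correct and follows essentially the same route as the paper's: the same decomposition $\xi_i(H)=\phi_i+u'V^{-1}(H)\epsilon^i/(u'V^{-1}(H)u)$, the vanishing cross term by independence, and the cancellation $u'V^{-1}(H)V(H)V^{-1}(H)u=u'V^{-1}(H)u$. Writing the second moments as variance plus squared mean instead of expanding the square directly is only a cosmetic difference.
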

\begin{proof}
Substituting $Y^1$ by $\phi_1 u+\epsilon^1$,  the independence of $\phi_1 $ and $\epsilon^1$ gets
\begingroup\small
\begin{eqnarray*}
\displaystyle \mathbb{E}(\xi_1(H))^2 &=& \mathbb{E}\left( \phi_1+\frac{u' V^{-1}(H)\epsilon^1}{u' V^{-1}(H) u}\right)^2\\
\displaystyle &=& \mathbb{E}\phi_1^2+\mathbb{E}\left(\frac{u' V^{-1}(H)\epsilon^1}{u' V^{-1}(H) u} \right)^2\\
\displaystyle &=& \sigma^2+\mu^2+\frac{1}{u ' V^{-1}(H) u}.
\end{eqnarray*}\endgroup
For the last equality we used the same techniques as in the proof of Theorem 2. For the second statement; by using the random variables $z_i(H)'$s  defined previously, we have
\begingroup\small
\begin{eqnarray*}
\displaystyle \mathbb{E}\left( \sum_{i=1}^{N}\xi_i(H)\right)^2 &=& \mathbb{E}\left( \sum_{i=1}^{N}\phi_i + \sum_{i=1}^{N} \frac{z(H)\cdot \epsilon^i}{z(H)\cdot u} \right)^2 \\
\displaystyle &=& \mathbb{E}\left( \sum_{i=1}^{N}\phi_i\right)^2 + \mathbb{E}\left( \sum_{i=1}^{N} \frac{z(H)\cdot \epsilon^i}{z(H)\cdot u}\right)^2\\
\displaystyle &=& \sum_{i=1}^{N}\mathbb{E}\phi_i^2+2\sum_{i<j}^{N}\mathbb{E}(\phi_i\phi_j)+\frac{1}{(u ' V^{-1}(H) u)^2}\mathbb{V}ar\left( \sum_{i=1}^{N} z(H)\cdot \epsilon^i\right)\\
\displaystyle &=& N\sigma^2+N^2\mu^2+\frac{N}{u ' V^{-1}(H) u}.
\end{eqnarray*}
\endgroup
\end{proof}
\begin{theorem}
The estimator $\widehat{\sigma^2}$ is asymptotically unbiased,  $\widehat{\sigma^2}\stackrel{\mathbb{P}-as}{\Longrightarrow}{}\sigma^2$   and
 $
\displaystyle \mathbb{V}ar(\widehat{\sigma^2}) = \frac{2(N-1)}{N^2}\left( \sigma^2 + \frac{1}{u ' V^{-1}(H) u}\right)^2\Longrightarrow 0
$ as $N\rightarrow \infty$.
\end{theorem}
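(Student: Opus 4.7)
The plan is to exploit the Gaussian structure uncovered in the proof of Theorem 2. Writing $Y^i=\phi_i u+\epsilon^i$, the statistic $\xi_i(H)$ introduced in (\ref{UQU}) decomposes as
$$\xi_i(H)=\phi_i+\frac{u'V^{-1}(H)\epsilon^i}{u'V^{-1}(H)u}.$$
Because $\phi_i\sim\mathcal{N}(\mu,\sigma^2)$ is independent of the Gaussian vector $\epsilon^i$, the sample $\xi_1(H),\dots,\xi_N(H)$ is i.i.d.\ $\mathcal{N}(\mu,\tau^2)$ with $\tau^2:=\sigma^2+\left(u'V^{-1}(H)u\right)^{-1}$; the two summands in $\tau^2$ are exactly those that appeared in the variance computation for $\widehat{\mu}$.

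The key algebraic observation is that, with $\bar\xi:=N^{-1}\sum_{i=1}^N\xi_i(H)$,
$$\widehat{\sigma^2}=\frac{1}{N}\sum_{i=1}^{N}\xi_i(H)^2-\bar\xi^{\,2}-\frac{1}{u'V^{-1}(H)u}=\frac{1}{N}\sum_{i=1}^{N}\left(\xi_i(H)-\bar\xi\right)^2-\frac{1}{u'V^{-1}(H)u}.$$
Asymptotic unbiasedness then drops out of the preceding Lemma: inserting its two formulas into $\mathbb{E}\widehat{\sigma^2}$ gives $\mathbb{E}\widehat{\sigma^2}=\frac{N-1}{N}\sigma^2-\frac{1}{N\,u'V^{-1}(H)u}$, which converges to $\sigma^2$. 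Strong consistency follows from two applications of the strong law of large numbers: $\frac{1}{N}\sum\xi_i(H)^2\longrightarrow\mathbb{E}\xi_1(H)^2$ almost surely (the finite second moment being provided by the Lemma) and $\bar\xi\longrightarrow\mu$ almost surely (already obtained in the proof of Theorem 2), so that $\widehat{\sigma^2}\longrightarrow\sigma^2$ a.s.

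For the variance I would rewrite the reduced statistic as $\widehat{\sigma^2}+(u'V^{-1}(H)u)^{-1}=\frac{N-1}{N}S^2$, where $S^2=\frac{1}{N-1}\sum(\xi_i(H)-\bar\xi)^2$ is the unbiased sample variance of the i.i.d.\ $\mathcal{N}(\mu,\tau^2)$ sample. The classical chi-square identity $(N-1)S^2/\tau^2\sim\chi^2_{N-1}$ then yields $\mathbb{V}ar(S^2)=2\tau^4/(N-1)$, whence
$$\mathbb{V}ar(\widehat{\sigma^2})=\left(\frac{N-1}{N}\right)^2\mathbb{V}ar(S^2)=\frac{2(N-1)}{N^2}\left(\sigma^2+\frac{1}{u'V^{-1}(H)u}\right)^2,$$
which clearly vanishes as $N\longrightarrow\infty$.

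The only real subtlety is justifying the exact normality of the $\xi_i(H)$: this amounts to noting that $(\phi_i,\epsilon^i)$ is jointly Gaussian (combining the $\mathcal{N}(\mu,\sigma^2)$ assumption on $\phi_i$ with the Gaussianity of the fBm vector making up $\epsilon^i$) and that $\xi_i(H)$ is a linear functional of that pair, so independence of the $\xi_i(H)$'s comes from independence of the subjects. Once that is in place, the rest is moment bookkeeping from the Lemma together with the standard chi-square distribution of the normal sample variance, so I do not anticipate any serious technical obstacle.
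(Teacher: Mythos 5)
Your proposal is correct and follows essentially the same route as the paper: the same decomposition $\xi_i(H)=\phi_i+\frac{u'V^{-1}(H)\epsilon^i}{u'V^{-1}(H)u}$, the same use of the preceding Lemma to get $\mathbb{E}(\widehat{\sigma^2})=\frac{N-1}{N}\sigma^2-\frac{1}{N\,u'V^{-1}(H)u}\to\sigma^2$, and the same strong law of large numbers argument for almost sure convergence. The only (cosmetic) difference is the variance step, where you invoke the $\chi^2_{N-1}$ law of the Gaussian sample variance while the paper applies the general fourth-central-moment formula for the variance of a sample variance and then specializes via $\mathbb{E}(\xi_1(H)-\mu)^4=3\beta^2$; the two computations are equivalent and both rest on the exact normality of the $\xi_i(H)$, which you correctly justify.
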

\begin{proof}
By virtue of Lemma 3, we get
\begingroup\small
\begin{eqnarray*}
\displaystyle \mathbb{E}(\widehat{\sigma^2}) &=& \frac{1}{N}\sum_{i=1}^N \left( \sigma^2+\mu^2+\frac{1}{u ' V^{-1}(H) u}\right)
 -\frac{1}{N^2}\left(N\sigma^2+N^2\mu^2+\frac{N}{u ' V^{-1}(H) u}  \right)\\
 &~&~~~~~~~~~~-\frac{1}{u ' V^{-1}(H) u}\\
 \displaystyle &=&  \frac{N-1}{N}\sigma^2-\frac{1}{N(u ' V^{-1}(H) u)}\Longrightarrow \sigma^2 ~~\mbox{as}~~N\rightarrow\infty.
\end{eqnarray*}
\endgroup
Applying the strong law of large numbers and the continuous mapping theorem for almost sure convergence, we get
\begingroup\small
\begin{eqnarray*}
\displaystyle \widehat{\sigma^2}&=& \frac{1}{N}\sum_{i=1}^N  \xi_i(H)^2 - \left( \frac{1}{N}\sum_{i=1}^N  \xi_i(H)\right)^2-\frac{1}{u ' V^{-1}(H) u} \\
\displaystyle &\stackrel{\mathbb{P}-as}{\Longrightarrow}{} &\mathbb{E}( \xi_1(H))^2-\mathbb{E}^2( \xi_1(H))-\frac{1}{u ' V^{-1}(H) u}=\mathbb{V}ar(\xi_1(H))-\frac{1}{u ' V^{-1}(H) u}\\
\displaystyle &=& \mathbb{V}ar\left( \phi_1+\frac{u' V^{-1}(H)\epsilon^1}{u' V^{-1}(H) u}\right)-\frac{1}{u ' V^{-1}(H) u}\\
\displaystyle &=& \mathbb{V}ar\phi_1+\mathbb{V}ar\left(\frac{u' V^{-1}(H)\epsilon^1}{u' V^{-1}(H) u}\right)-\frac{1}{u ' V^{-1}(H) u}\\
\displaystyle &=& \sigma^2+\mathbb{E}\left(\frac{u' V^{-1}(H)\epsilon^1}{u' V^{-1}(H) u}\right)^2-\frac{1}{u ' V^{-1}(H) u}=\sigma^2.
\end{eqnarray*}\endgroup
Similar computations lead to
\begin{eqnarray*}
\displaystyle \mathbb{V}ar(\widehat{\sigma^2}) &=& \frac{N-1}{N^3}\left( (N-1)\mathbb{E}(\xi_1(H)-\mu)^4-(N-3)\beta^2 \right)\\
\displaystyle &=& \frac{2(N-1)}{N^2}\beta^2,
\end{eqnarray*}where $\displaystyle \beta=\mathbb{V}ar(\xi_1(H))=\sigma^2+\frac{1}{u' V^{-1}(H) u}$. In the last equality we used the fact that $\displaystyle (\xi_1(H)-\mu)$ is a centered Gaussian random variables with variance $\beta$.
\end{proof}
\paragraph*{Remark}
 For the case of continuous observation with horizon $T$, we propose the following estimator $\widetilde{\mu}(T,N)$ defined by
$$
\displaystyle \widetilde{\mu}(T,N) =\frac{1}{NT}\sum_{i=1}^{N} Y^i(T)
.$$
It is easy to see that
$
\displaystyle \mathbb{E}\abs{\frac{1}{T}Y^i(T)-\phi_i}^2 \leq  \frac{1}{T^{2-2 H}} \longrightarrow 0$ as $T \longrightarrow \infty$ and
$\widetilde{\mu}(T,N)$ is consistent when $T,N\rightarrow\infty$. The reason we choose this double asymptotic framework, is that we proceed in two steps; in the first step
we estimate random effects $\phi_i$ as the horizon $T$ increases to $\infty$, then we use the empirical mean and variance to estimate $\theta=(\mu,\sigma^2)$, where the random effects are replaced by their estimators.
\begin{theorem}
 The estimators $\widehat{\mu}$ and $\widehat{\sigma^2}$ are asymptotically normal, {\it i.e.}
\begin{equation}\label{Eq4}
\displaystyle \sqrt{N}\left(\widehat{\mu}-\mu\right)\stackrel{\mathcal{D}}{\Longrightarrow}{} \mathcal{N}\left(0,\sigma^2+\frac{1}{u' V^{-1}(H) u} \right)\,\, \mbox{as}  ~N\rightarrow\infty,
\end{equation} and
\begin{equation}\label{Eq5}
\displaystyle \sqrt{\frac{N}{2}}\left(\widehat{\sigma^2}-\sigma^2\right)\stackrel{\mathcal{D}}{\Longrightarrow}{}  \mathcal{N}\left(0,\left(\sigma^2+\frac{1}{u' V^{-1}(H) u} \right)^2\right) \,\, \mbox{as}  ~N\rightarrow\infty.
\end{equation}
\end{theorem}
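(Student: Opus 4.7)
The plan is to reduce everything to a statement about i.i.d.\ Gaussian observations. Recall from the proof of Theorem 2 that $Y^i = \phi_i u + \epsilon^i$, which yields
$$
\xi_i(H) = \frac{u'V^{-1}(H)Y^i}{u'V^{-1}(H)u} = \phi_i + \frac{u'V^{-1}(H)\epsilon^i}{u'V^{-1}(H)u}.
$$
Since $\phi_i \sim \mathcal{N}(\mu,\sigma^2)$ is independent of the Gaussian vector $\epsilon^i$, and the ratio on the right is a linear functional of $\epsilon^i$, each $\xi_i(H)$ is Gaussian. The computation already carried out in the proof of Theorem 2 and in Lemma 3 shows that its mean is $\mu$ and its variance equals $\beta := \sigma^2 + \bigl(u'V^{-1}(H)u\bigr)^{-1}$. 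The independence of the subjects makes $\{\xi_i(H)\}_{i=1}^N$ an i.i.d.\ sample from $\mathcal{N}(\mu,\beta)$.

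For the first assertion, observe that $\widehat{\mu} = \tfrac{1}{N}\sum_{i=1}^N \xi_i(H)$. Being an average of i.i.d.\ $\mathcal{N}(\mu,\beta)$ random variables, $\sqrt{N}(\widehat{\mu}-\mu)$ is in fact exactly $\mathcal{N}(0,\beta)$ for every $N$; the central limit theorem then trivially gives \eqref{Eq4}.

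For the second assertion, rewrite
$$
\widehat{\sigma^2} = \frac{1}{N}\sum_{i=1}^N \xi_i(H)^2 - \left(\frac{1}{N}\sum_{i=1}^N \xi_i(H)\right)^{\!2} - \frac{1}{u'V^{-1}(H)u} = S_N^2 - \frac{1}{u'V^{-1}(H)u},
$$
where $S_N^2$ is the (biased) sample variance of the i.i.d.\ Gaussian sample $\xi_i(H)$. Since $\mathbb{E}(S_N^2) = \tfrac{N-1}{N}\beta$ and $N S_N^2/\beta \sim \chi^2_{N-1}$, the classical CLT for the chi-square distribution (or equivalently the delta method applied to the sample mean of $\xi_i^2$ combined with Slutsky) yields $\sqrt{N}(S_N^2-\beta) \stackrel{\mathcal{D}}{\Longrightarrow} \mathcal{N}(0,2\beta^2)$. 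Because $\widehat{\sigma^2}-\sigma^2 = S_N^2 - \beta$, we immediately obtain
$$
\sqrt{N/2}\bigl(\widehat{\sigma^2}-\sigma^2\bigr) \stackrel{\mathcal{D}}{\Longrightarrow} \mathcal{N}(0,\beta^2),
$$
which is \eqref{Eq5}.

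There is no genuine obstacle here: once one recognizes that $\xi_i(H)$ are i.i.d.\ Gaussian with variance $\beta$, both claims are textbook consequences of the CLT for sample means and sample variances. The only mild bookkeeping concern is the constant offset $1/(u'V^{-1}(H)u)$, which enters $\widehat{\sigma^2}$ in exactly the right amount to shift the limiting mean from the sample-variance target $\beta$ down to $\sigma^2$ without altering the asymptotic variance.
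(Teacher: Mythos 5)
Your proof is correct and follows essentially the same route as the paper: both reduce to the observation that the $\xi_i(H)$ are i.i.d.\ Gaussian with mean $\mu$ and variance $\beta=\sigma^2+(u'V^{-1}(H)u)^{-1}$, obtain \eqref{Eq4} from the CLT for the sample mean, and obtain \eqref{Eq5} by writing $\widehat{\sigma^2}$ as the sample variance of the $\xi_i(H)$ shifted by the constant $(u'V^{-1}(H)u)^{-1}$, so that a CLT for the sample variance plus a Slutsky argument finishes the job. The one substantive difference is in how the sample-variance CLT is justified: the paper applies the law of large numbers and CLT directly to the summands $\widetilde{\xi}_i(H)^2=\tfrac{N}{N-1}(\xi_i(H)-\widehat{\mu})^2$, which are not independent since each involves $\widehat{\mu}$, whereas your appeal to the exact $\chi^2_{N-1}$ distribution of $NS_N^2/\beta$ (or the delta method) makes that step rigorous; in this respect your argument is slightly cleaner than the paper's.
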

\begin{proof}
Since $\widehat{\mu}$ is the average of $N$ i.i.d random variables with finite mean and finite variance, (\ref{Eq4}) follows imediately from the central limit theorem.  In order to show (\ref{Eq5}) we consider the following random variables $\displaystyle \widetilde{\xi}_i(H)=\sqrt{\frac{N}{N-1}}(\xi_i(H)-\widehat{\mu})$, $i=1,\cdots,N$ and set $\displaystyle \beta=\sigma^2+\frac{1}{u' V^{-1}(H) u} $. We see that  $\displaystyle \left( \widetilde{\xi}_i(H)~,~i=1,2,\cdots\right)$ is centered  Gaussian process, with variance  $\mathbb{V}ar(\widetilde{\xi}_i(H))=\mathbb{E}(\widetilde{\xi}_i(H)^2)=\beta$, and
$ \displaystyle \mbox{and}~~~~~~\mathbb{V}ar(\widetilde{\xi}_i(H)^2)=2\beta^2$. So
using the  strong law of large numbers, we have
$
\displaystyle \widetilde{\sigma^2}=\frac{1}{N}\sum_{i=1}^{N} \widetilde{\xi}_i(H)^2  \stackrel{\mathbb{P}-as}{\Longrightarrow}{} \beta
$  as $N\rightarrow\infty$.  Furthermore,  the central limit theorem  leads to
$
\displaystyle \sqrt{N}\left(\widetilde{\sigma^2}-\beta\right) \stackrel{\mathcal{D}}{\Longrightarrow}{}\mathcal{N}(0,2\beta^2)
$ as $N\rightarrow\infty$.  Since
$
\displaystyle \sqrt{\frac{N}{2}}\left(\widehat{\sigma^2}-\sigma^2\right) = \alpha_N \sqrt{N}\left(\widetilde{\sigma^2}-\beta\right)-
\varepsilon_N,~\mbox{where}~\alpha_N=\frac{N-1}{\sqrt{2}N}~\mbox{and}~\varepsilon_N=
\frac{\beta}{\sqrt{2 N}}
$. Therefore, using Slutsky theorem, the convergence in  (\ref{Eq5}) is easily concluded.
\end{proof}
\section{Simulations}\label{Sec.4}
We will implement the two population parameter estimators for the model that we have studied to show their empirical behavior. We will simulate the observed vectors $Y^i$ using (\ref{PSYbis})  for two   numbers of subjects $N=50$ and $N=500$  with different lengths of observations per subject; $n=2^2$, $n=2^5$ and $n=2^8$. The fractional Brownian motions are simulated  as in \cite{Kroese and Botev 2013}. The experiment is as follows : we set $H$  equal to  $0.15$, $0.5$ and $0.85$.  For each case, replications involving $400$ samples are obtained by  resampling $n$ trajectories of $Y^i$.\\ \ \\
The averages of the estimators and their exact against empirical standard deviations are reported in the Tables \ref{Table 1}-\ref{Table 3}.  The tables show that the parameter estimations  are generally much closer to their true values as the number of subjects increases. Figures \ref{figure 1}-\ref{figure 3} display the histograms densities of the  estimators, which reveal the convergence toward a limit distribution also as $N$ is sufficiently  large, this confirms what was established before. Looking at Table \ref{Table 1}, we see that the estimating for $\sigma^2$ is  not really close to exact values when there are very few observations ($n \leq 2^3$) per subject when $H = 0.85$, this case has been observed every time when $H$ becomes large than $1/2$. In this situation,  for   the real cases where the true value of $\sigma^2$ is not available, it will be better to choose $n$  as large as possible ($n\geq 2^4$) but this leads to huge computational cost for large values of $N$. Yet, to keep the balance between the computational cost and goodness of fit, a small values of $n$ and sufficiently large values of $N$ should be considered.
\begin{table}[h]
\resizebox{\textwidth}{!}{%
\begin{tabular}{lllll}
\hline
True  values                   \hspace{2cm}& $H=0.15$           & $H=0.50$   & $H=0.85$ \\ \cline{1-1}
\begin{tabular}[c]{@{}l@{}} $N=50$  \end{tabular} ~~~&   \begin{tabular}[c]{@{}l@{}}Mean (Std. dev.)\end{tabular} ~~~&  \begin{tabular}[c]{@{}l@{}}Mean (Std. dev.)\end{tabular} ~~~&  \begin{tabular}[c]{@{}l@{}} Mean and  Std. dev.'s \end{tabular} &  ~~~
\\  \cline{2-4}
\hline
\begin{tabular}[c]{@{}l@{}}$\mu=-2$\\ $\sigma^2=1$\end{tabular} ~~~~& \begin{tabular}[c]{@{}l@{}}
-1.9902   (\color{red}{0.1456} \color{blue}{0.1430})\\~0.9744 (\color{red}{0.2099  }    \color{blue}{0.1942})\end{tabular} ~~~~& \begin{tabular}[c]{@{}l@{}}~-1.9964     (\color{red}{0.1549 }     \color{blue}{0.1594})\\ ~ 1.0303   (\color{red}{0.2376 }  \color{blue}{0.2494})   \end{tabular} ~~~~& \begin{tabular}[c]{@{}l@{}} ~-1.9820     (\color{red}{0.1795 }    \color{blue}{0.2009})    \\~1.3314     (\color{red}{0.3191 } \color{blue}{0.3891})\end{tabular} &
\\
            $N=500$                       &                                   &  &  \\ \cline{1-1}
\begin{tabular}[c]{@{}l@{}}$\mu=-2$\\ $\sigma^2=1$\end{tabular}   ~~~~& \begin{tabular}[c]{@{}l@{}}
-2.0009     (\color{red}{ 0.0460} \color{blue}{ 0.0441})\\~    0.9964     (\color{red}{0.0670}    \color{blue}{0.0689})\end{tabular} ~~~~& \begin{tabular}[c]{@{}l@{}}  -1.9986      (\color{red}{0.0490}    \color{blue}{0.0515})\\~ 1.0442      (\color{red}{0.0758}    \color{blue}{0.0836})\end{tabular} ~~~~& \begin{tabular}[c]{@{}l@{}}~-1.9985     (\color{red}{0.0568}  \color{blue}{0.0634}) \\~    1.2022     (\color{red}{0.1018}    \color{blue}{0.1228})\end{tabular} &  ~~~~\\
\hline
\end{tabular}}
\caption{The means with exact (red) and empirical (blue) standard deviations of estimators $\widehat{\mu}$, $\widehat{\sigma^2}$ based on $400$ samples, with true values $(\mu_0,\sigma^2_0)=(-2,1)$, $\bm{(T,n)=(5,2^2)}$, and different values of $N$ ($=50;500$).(For interpretation of the references to colour in this table the reader is referred to the electronic version of
this article).} \label{Table 1}
\end{table}

\begin{figure}{}
 \begin{center}
   \includegraphics[scale=1,totalheight=4cm,width=0.35\textwidth ]{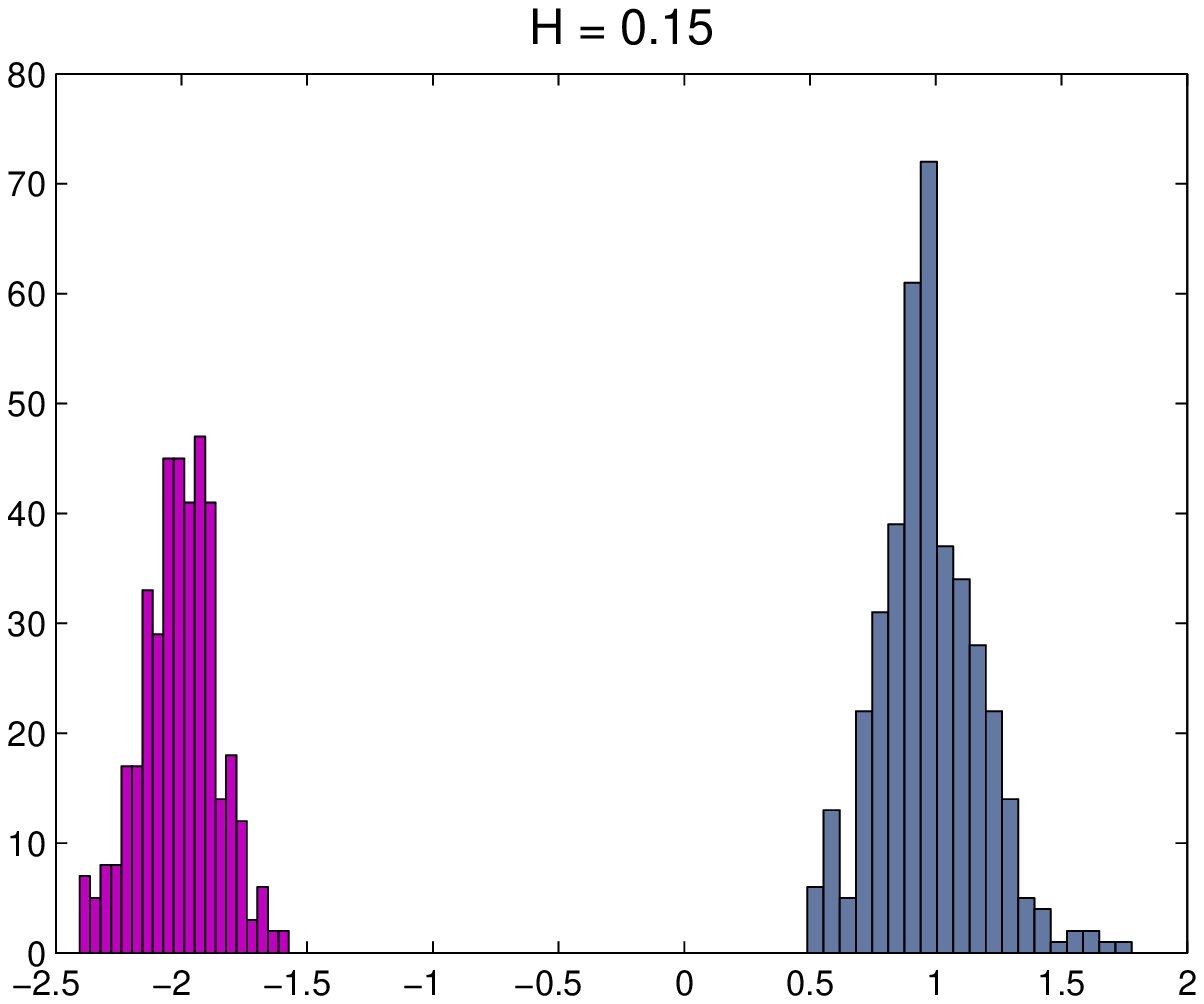}\hspace*{-0.27cm}
  \includegraphics[scale=1,totalheight=4cm,width=0.35\textwidth ]{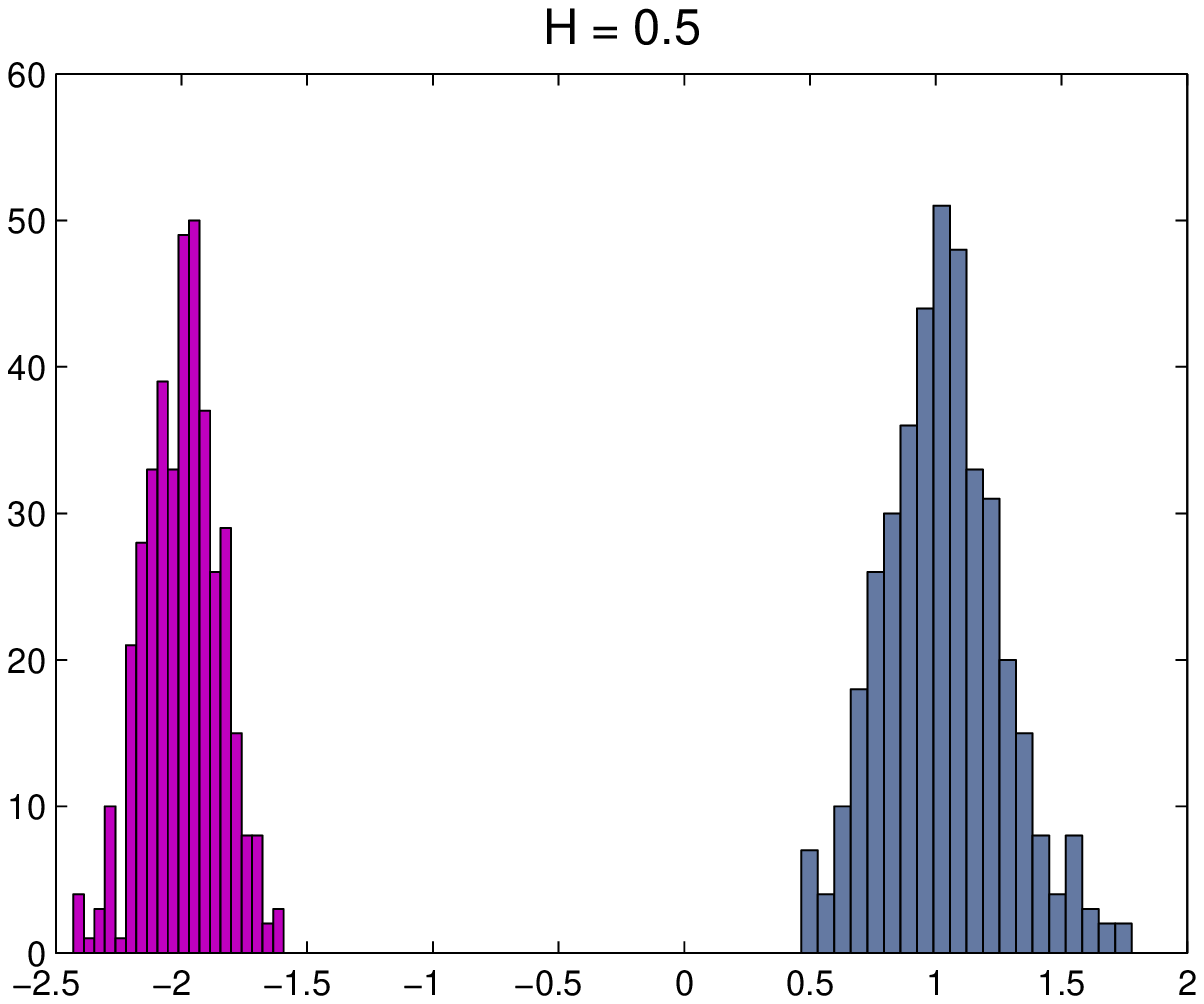}\hspace*{-0.27cm}
 \includegraphics[scale=1,totalheight=4cm,width=0.35\textwidth  ]{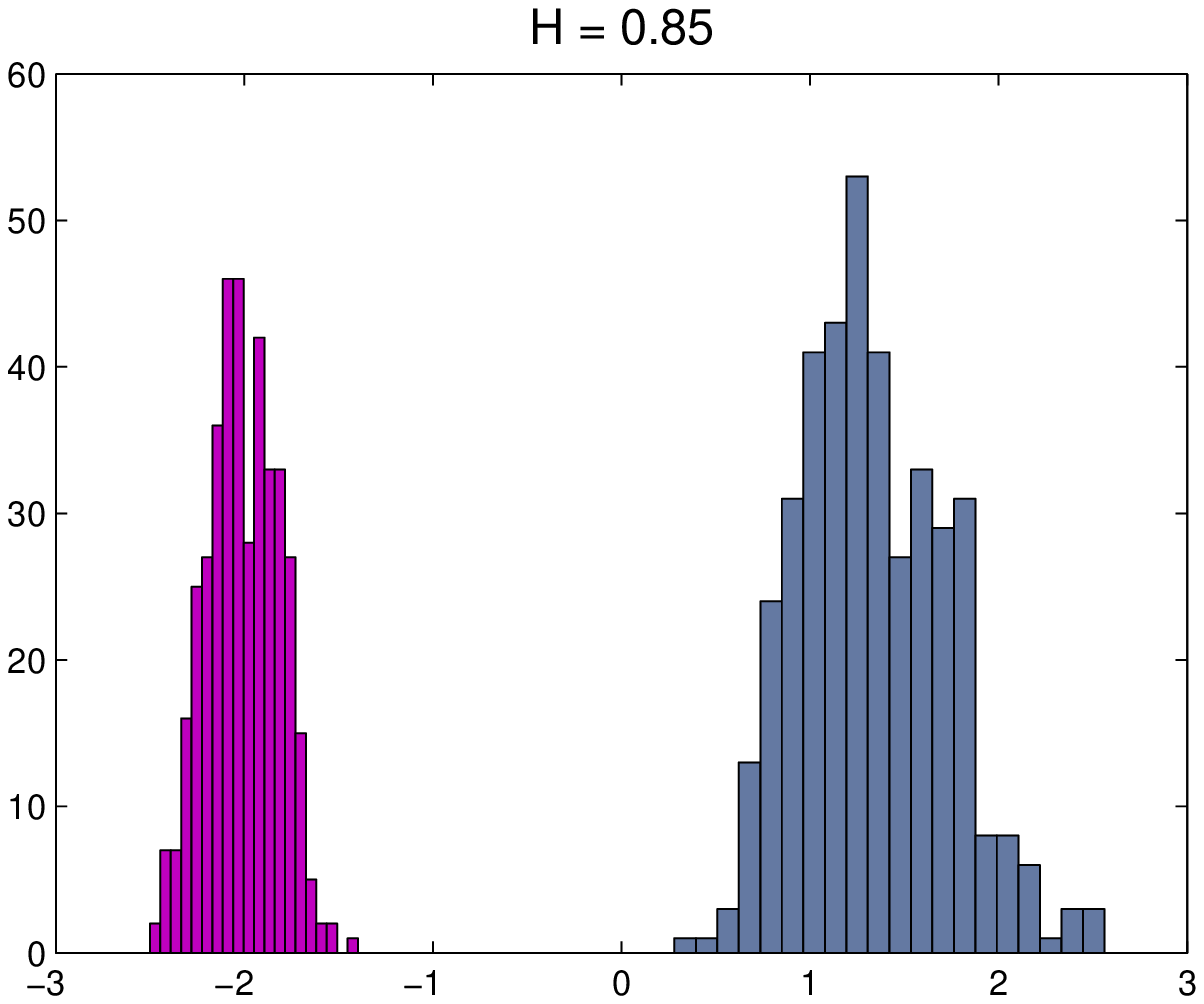}\hspace*{-0.27cm}\\
    \includegraphics[scale=1,totalheight=4cm,width=0.35\textwidth ]{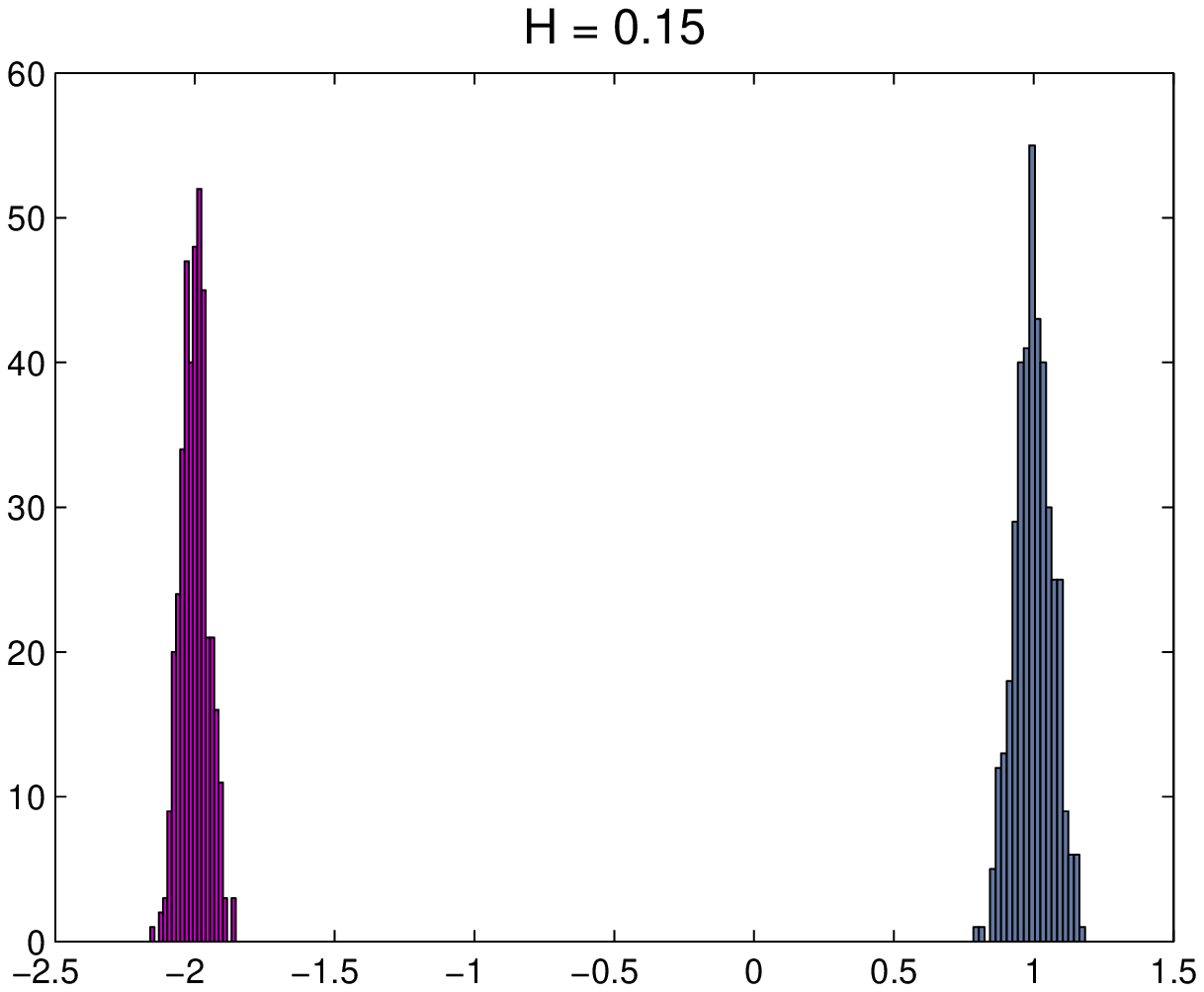}\hspace*{-0.27cm}
  \includegraphics[scale=1,totalheight=4cm,width=0.35\textwidth ]{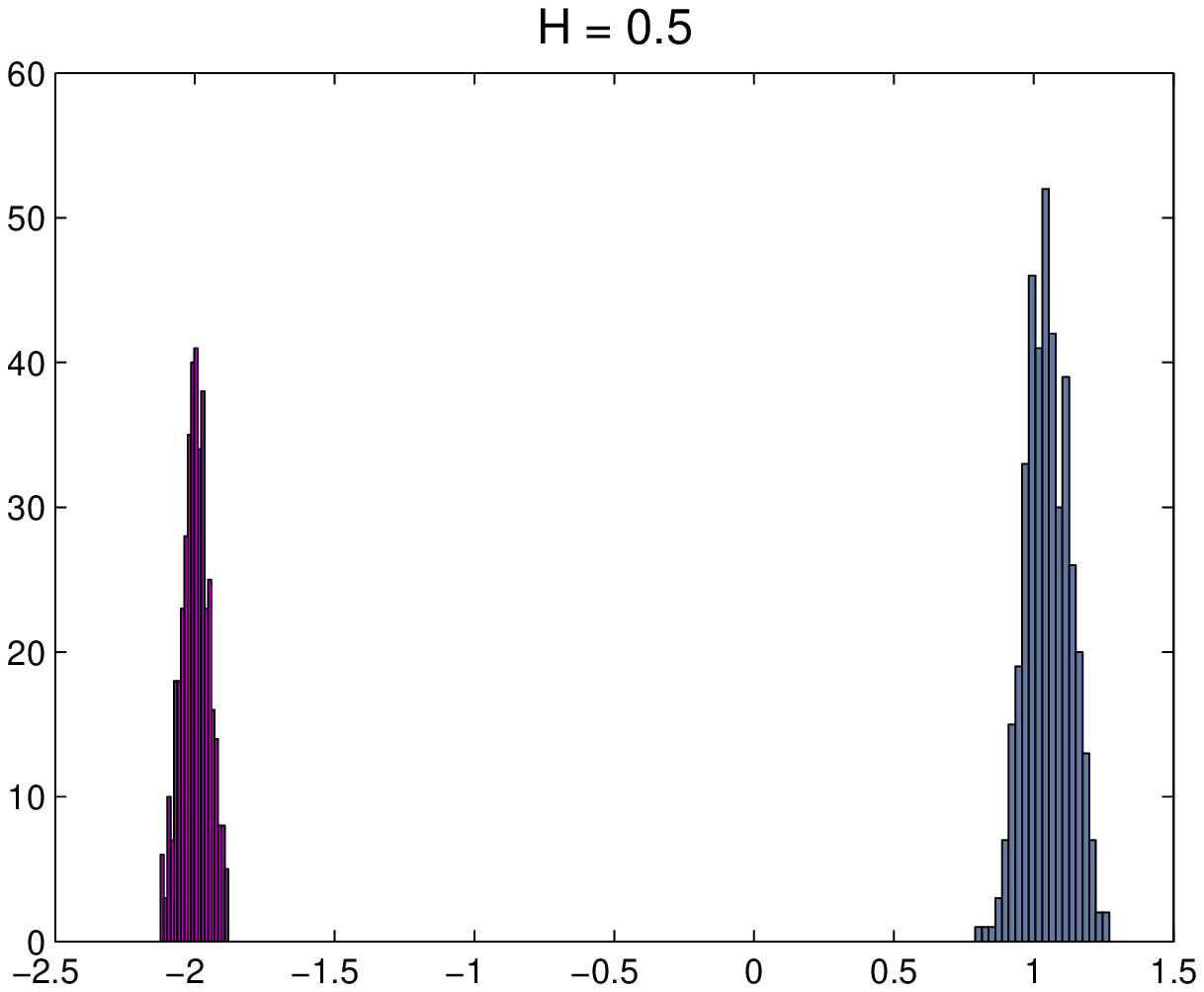}\hspace*{-0.27cm}
   \includegraphics[scale=1,totalheight=4cm ,width=0.35\textwidth ]{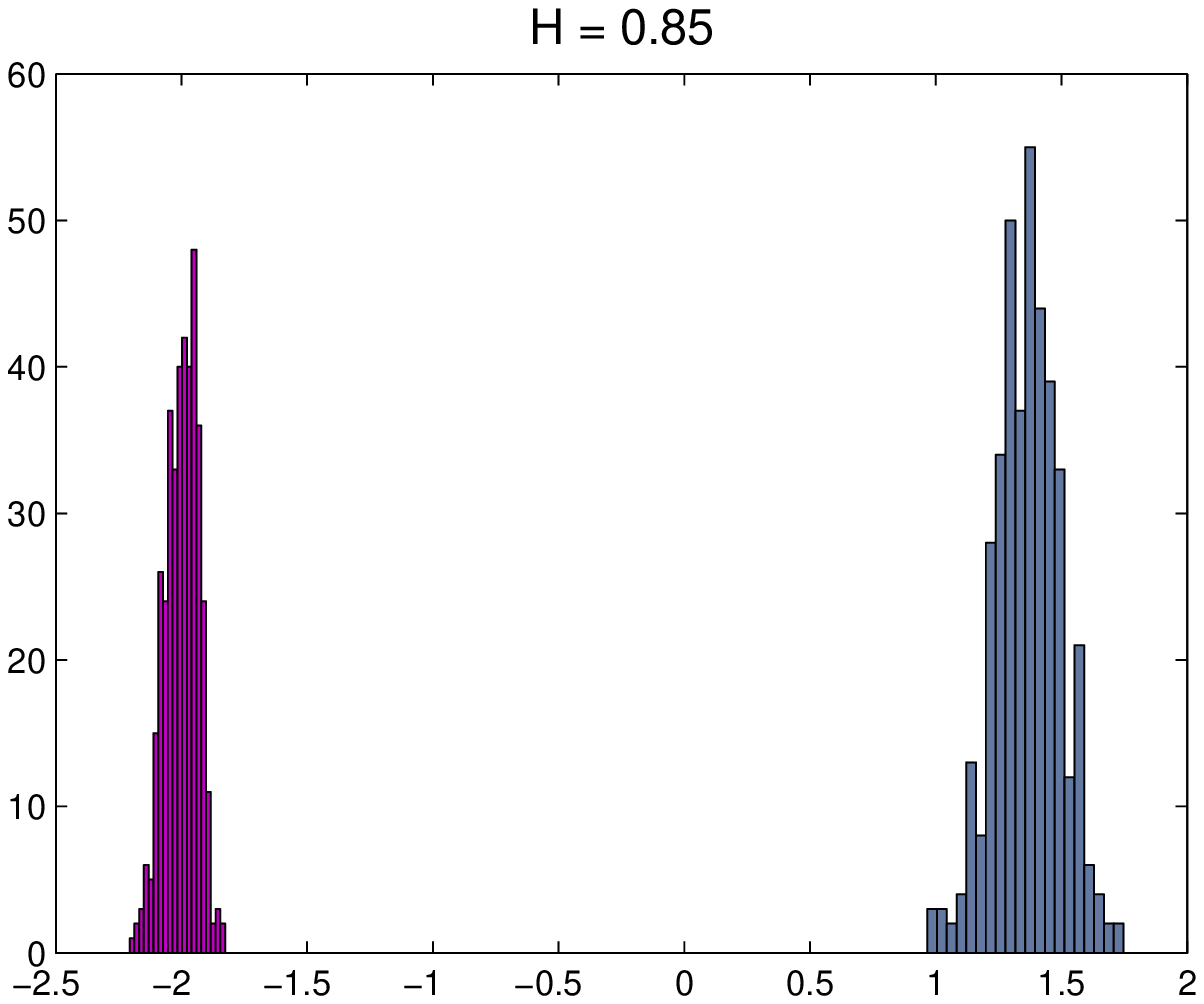}
  \end{center}
  \caption{Frequency histograms of population parameter estimates based on $400$ samples for different values of $(N,H)$. In each box of the two rows  (top $N=50$ and bottom $N=500$) histograms of $\widehat{\mu}$ (pink) and $\widehat{\sigma^2}$ (gray) are given for fixed parameters $(\mu,\sigma^2,T,\bm{n})=(-2,1,5,\bm{2^2})$. (For interpretation of the references to colour in the legend of this figure, the reader is referred to the electronic version of this article)}
  \label{figure 1}
\end{figure}
\begin{table}
\resizebox{\textwidth}{!}{%
\begin{tabular}{lllll}
\hline
True  values                   \hspace{2cm}& $H=0.15$           & $H=0.50$   & $H=0.85$ \\ \cline{1-1}
\begin{tabular}[c]{@{}l@{}} $N=50$  \end{tabular} ~~~&   \begin{tabular}[c]{@{}l@{}}Mean (Std. dev.)\end{tabular} ~~~&  \begin{tabular}[c]{@{}l@{}}Mean (Std. dev.)\end{tabular} ~~~&  \begin{tabular}[c]{@{}l@{}} Mean and  Std. dev.'s \end{tabular} &  ~~~
\\  \cline{2-4}
\hline
\begin{tabular}[c]{@{}l@{}}$\mu=-2$\\ $\sigma^2=1$\end{tabular} ~~~~& \begin{tabular}[c]{@{}l@{}}
-2.0050     (\color{red}{0.1449} \color{blue}{0.1427})\\~0.9713   (\color{red}{0.2077}    \color{blue}{0.2075})\end{tabular} ~~~~& \begin{tabular}[c]{@{}l@{}}~-2.0146       (\color{red}{0.1549}     \color{blue}{0.1518})\\ ~ 1.0028     (\color{red}{0.2376}  \color{blue}{0.2247})   \end{tabular} ~~~~& \begin{tabular}[c]{@{}l@{}} ~-1.9824       (\color{red}{0.1793}    \color{blue}{0.1920})    \\~1.0871       (\color{red}{0.3181} \color{blue}{0.3391})\end{tabular} &
\\
            $N=500$                       &                                   &  &  \\ \cline{1-1}
\begin{tabular}[c]{@{}l@{}}$\mu=-2$\\ $\sigma^2=1$\end{tabular}   ~~~~& \begin{tabular}[c]{@{}l@{}}
-2.0057       (\color{red}{0.0458} \color{blue}{0.0434})\\~    1.0005       (\color{red}{0.0663}    \color{blue}{0.0671})\end{tabular} ~~~~& \begin{tabular}[c]{@{}l@{}}  -1.9979        (\color{red}{0.0490}    \color{blue}{0.0498})\\~ 1.0021        (\color{red}{0.0758}    \color{blue}{0.0758})\end{tabular} ~~~~& \begin{tabular}[c]{@{}l@{}}~-2.0038       (\color{red}{0.0567}  \color{blue}{0.0596}) \\~    1.0849     (\color{red}{0.1015}    \color{blue}{0.1011})\end{tabular} &  ~~~~\\
\hline
\end{tabular}}
\label{Table 2}\caption{The means with exact (red) and empirical (blue) standard deviations of estimators $\widehat{\mu}$, $\widehat{\sigma^2}$ based on $400$ samples, with true values $(\mu_0,\sigma^2_0)=(-2,1)$, $\bm{(T,n)=(5,2^5)}$, and different values of $N$ ($=50;500$).(For interpretation of the references to colour in this table the reader is referred to the electronic version of
this article).}
\end{table}

\begin{figure}{}
  \begin{center}
    \includegraphics[scale=1,totalheight=4cm,width=0.35\textwidth ]{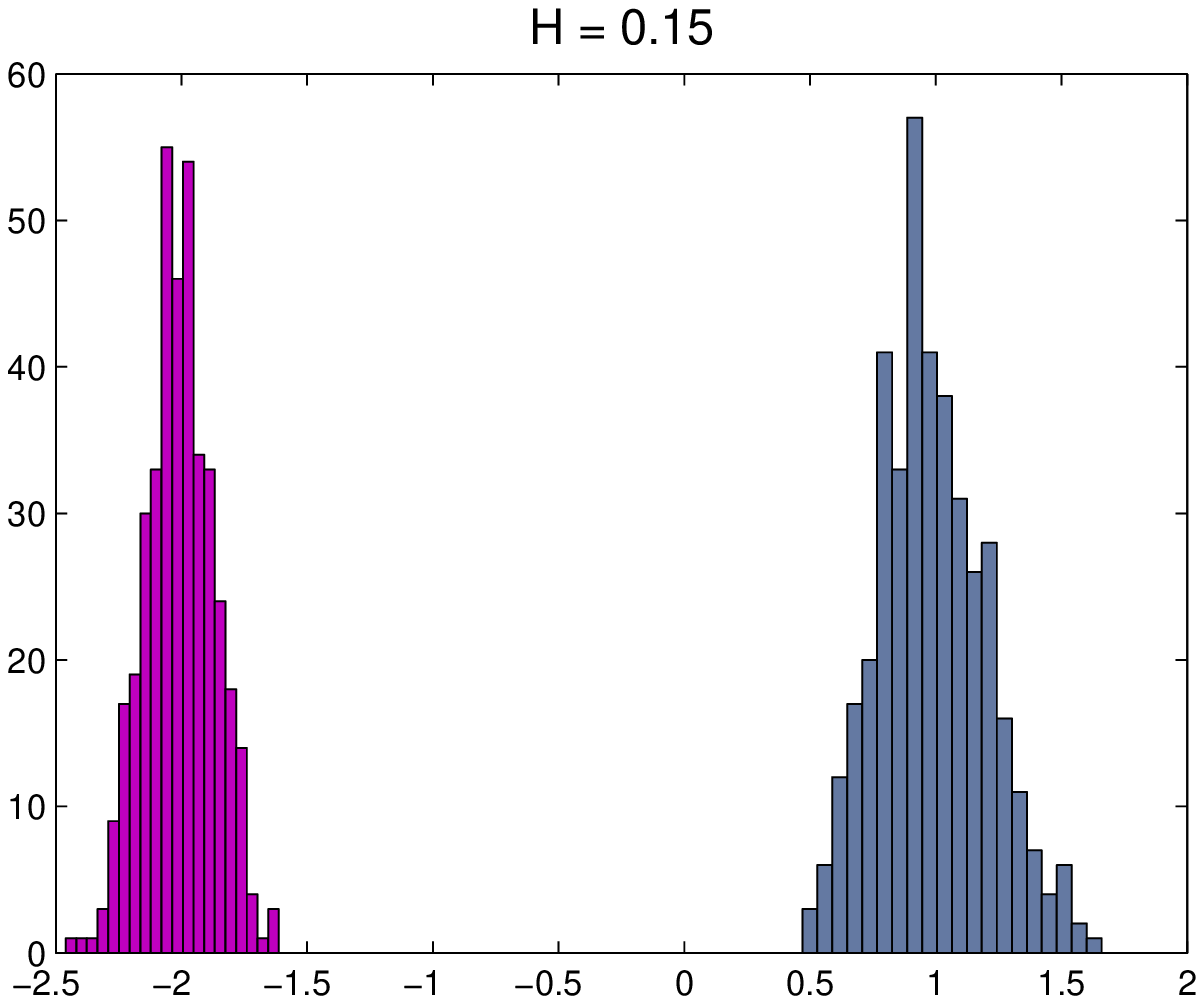}\hspace*{-0.27cm}
   \includegraphics[scale=1,totalheight=4cm,width=0.35\textwidth ]{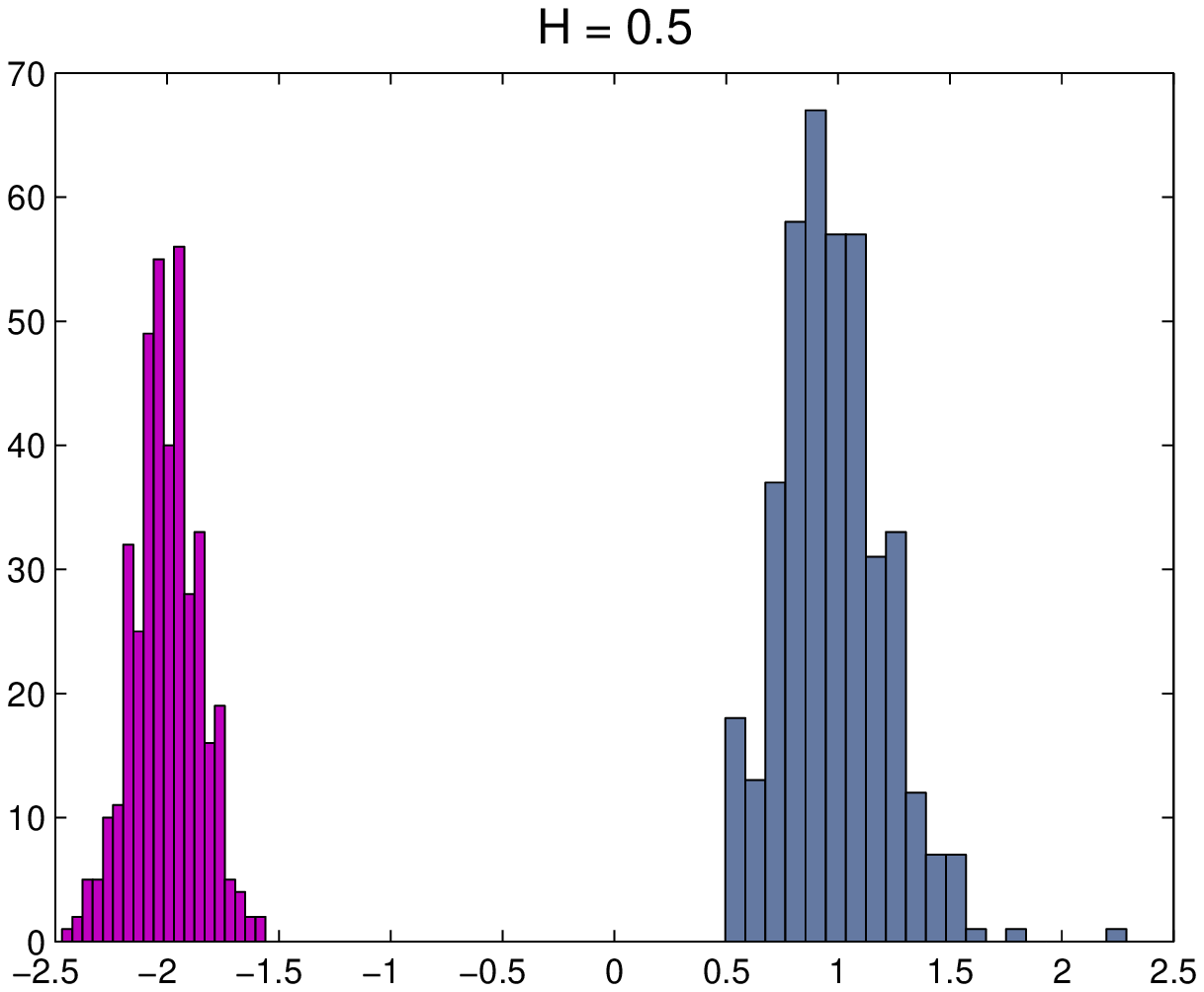}\hspace*{-0.27cm}
   \includegraphics[scale=1,totalheight=4cm ,width=0.35\textwidth ]{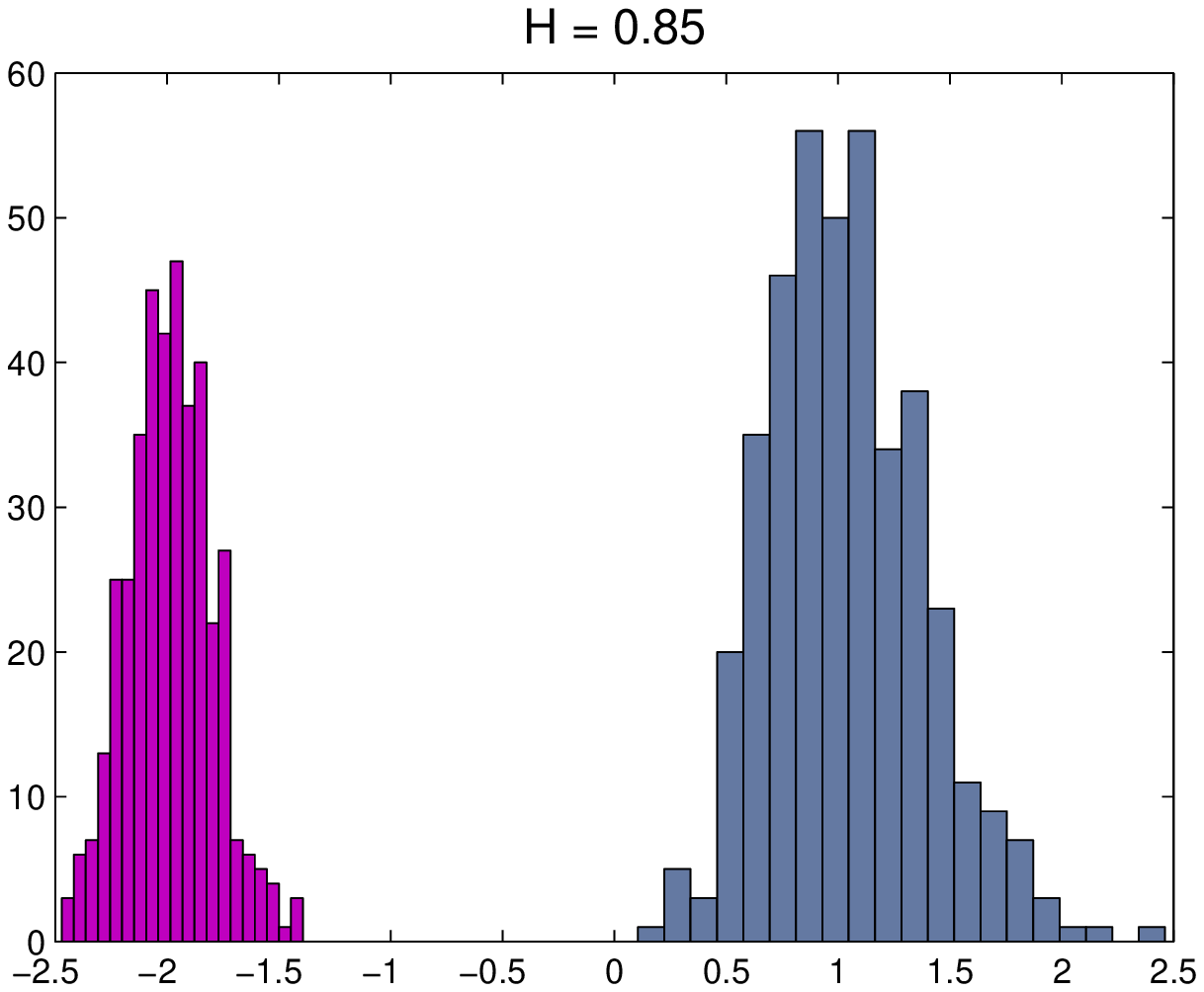}\\
    \includegraphics[scale=1,totalheight=4cm,width=0.35\textwidth ]{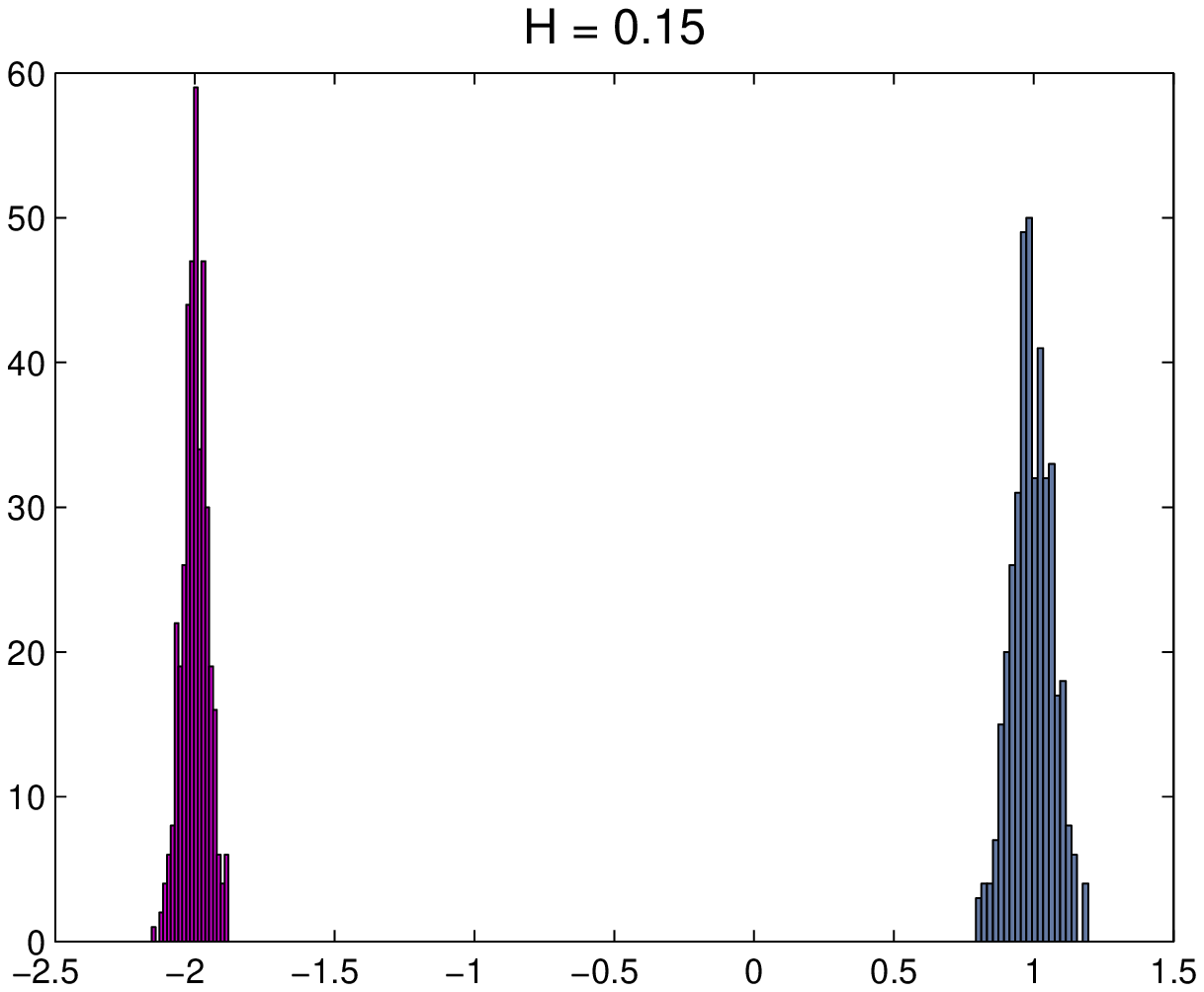}\hspace*{-0.27cm}
   \includegraphics[scale=1,totalheight=4cm,width=0.35\textwidth ]{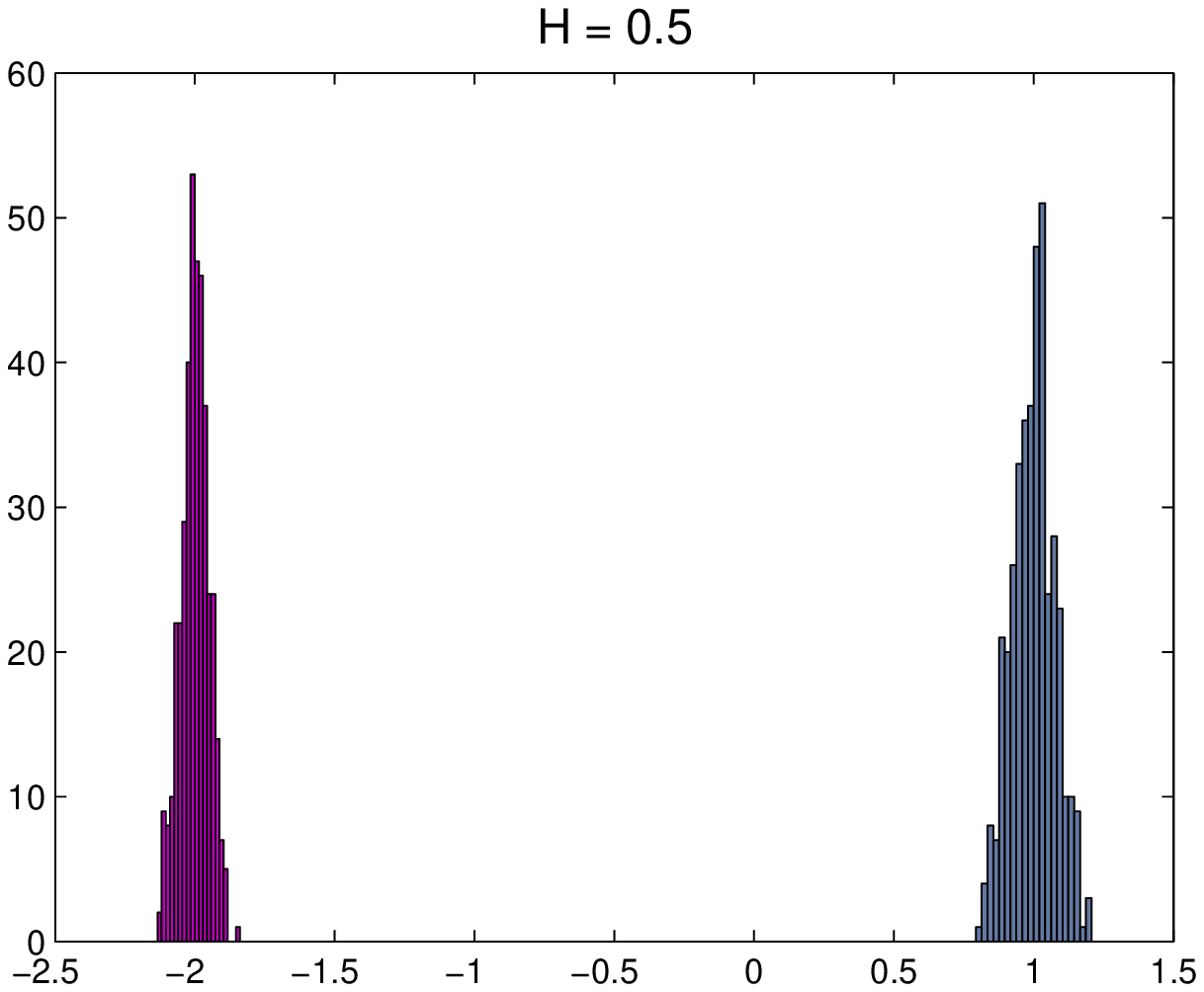}\hspace*{-0.27cm}
   \includegraphics[scale=1,totalheight=4cm ,width=0.35\textwidth ]{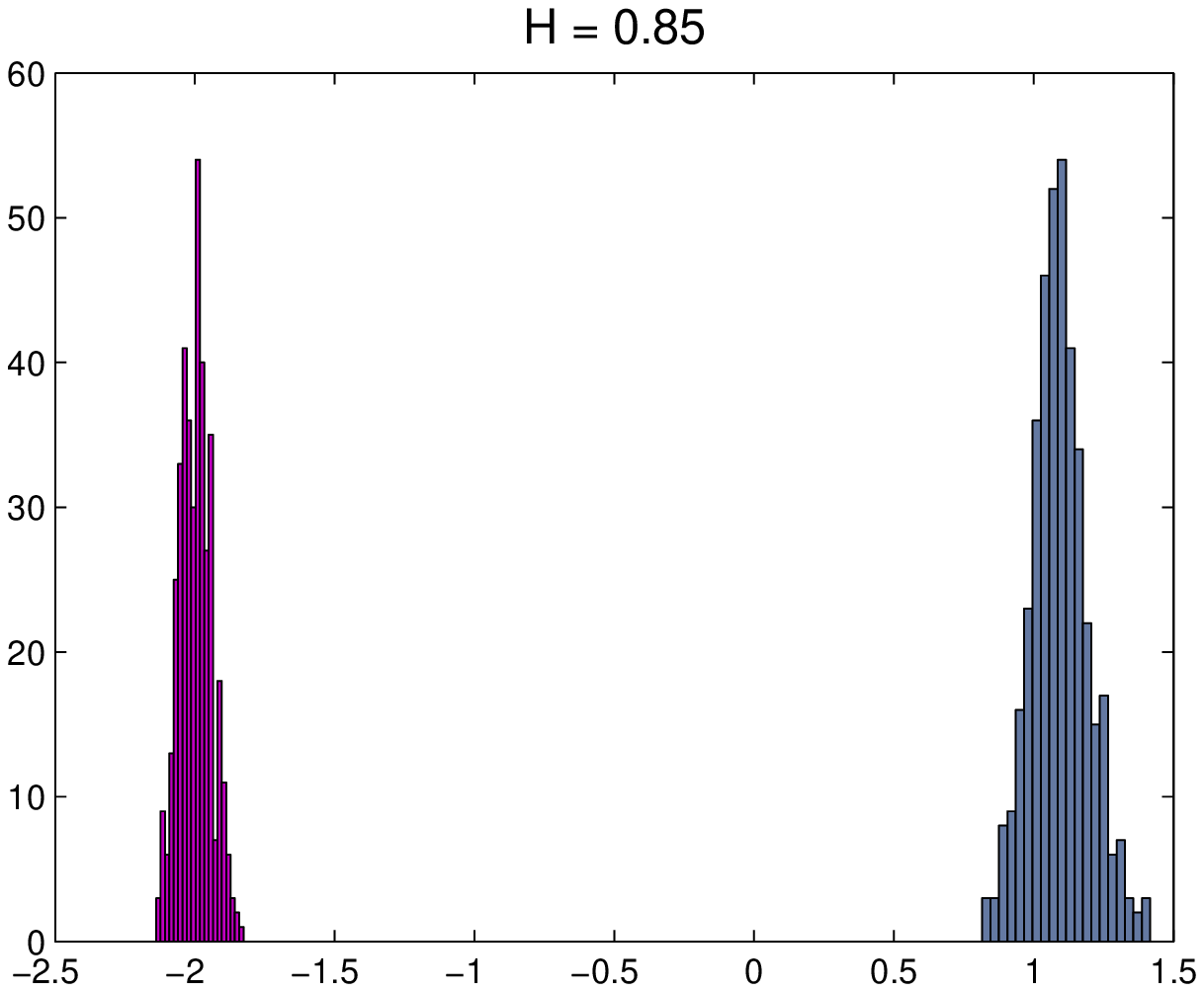}
 \caption{Frequency histograms of population parameter estimates based on $400$ samples for different values of $(N,H)$. In each box of the two rows  (top $N=50$ and bottom $N=500$) histograms of $\widehat{\mu}$ (pink) and $\widehat{\sigma^2}$ (gray) are given for fixed parameters $(\mu,\sigma^2,T,\bm{n})=(-2,1,5,\bm{2^5})$. (For interpretation of the references to colour in the legend of this figure, the reader is referred to the electronic version of this article). }
   \end{center}
 \label{figure 2}
\end{figure}

\begin{table}
\resizebox{\textwidth}{!}{%
\begin{tabular}{lllll}
\hline
True  values                   \hspace{2cm}& $H=0.15$           & $H=0.50$   & $H=0.85$ \\ \cline{1-1}
\begin{tabular}[c]{@{}l@{}} $N=50$  \end{tabular} ~~~&   \begin{tabular}[c]{@{}l@{}}Mean (Std. dev.)\end{tabular} ~~~&  \begin{tabular}[c]{@{}l@{}}Mean (Std. dev.)\end{tabular} ~~~&  \begin{tabular}[c]{@{}l@{}} Mean and  Std. dev.'s \end{tabular} &  ~~~
\\  \cline{2-4}
\hline
\begin{tabular}[c]{@{}l@{}}$\mu=-2$\\ $\sigma^2=1$\end{tabular} ~~~~& \begin{tabular}[c]{@{}l@{}}
-2.0015     (\color{red}{0.1447} \color{blue}{0.1454})\\~0.9996   (\color{red}{0.2073}    \color{blue}{0.2008})\end{tabular} ~~~~& \begin{tabular}[c]{@{}l@{}}~-1.9960       (\color{red}{0.1549}     \color{blue}{0.1563})\\ ~ 0.9764     (\color{red}{0.2376}  \color{blue}{0.2448})   \end{tabular} ~~~~& \begin{tabular}[c]{@{}l@{}} ~-2.0055   (\color{red}{0.1792}    \color{blue}{0.1709})    \\~0.9971       (\color{red}{0.3180} \color{blue}{0.3323})\end{tabular} &
\\
            $N=500$                       &                                   &  &  \\ \cline{1-1}
\begin{tabular}[c]{@{}l@{}}$\mu=-2$\\ $\sigma^2=1$\end{tabular}   ~~~~& \begin{tabular}[c]{@{}l@{}}
-1.9997       (\color{red}{0.0458} \color{blue}{0.0442})\\~    0.9971       (\color{red}{0.0662}    \color{blue}{0.0650})\end{tabular} ~~~~& \begin{tabular}[c]{@{}l@{}}  -2.0009        (\color{red}{0.0490}    \color{blue}{0.0471})\\~ 0.9993        (\color{red}{0.0758}    \color{blue}{0.0747})\end{tabular} ~~~~& \begin{tabular}[c]{@{}l@{}}~-2.0006       (\color{red}{0.0567}  \color{blue}{0.0566}) \\~    1.0083       (\color{red}{0.1015}    \color{blue}{0.1045})\end{tabular} &  ~~~~\\
\hline
\end{tabular}}
\caption{The means with exact (red) and empirical (blue) standard deviations of estimators $\widehat{\mu}$, $\widehat{\sigma^2}$ based on $400$ samples, with true values $(\mu_0,\sigma^2_0)=(-2,1)$, $\bm{(T,n)=(5,2^8)}$, and different values of $N$ ($=50;500$).(For interpretation of the references to colour in this table the reader is referred to the electronic version of
this article).} \label{Table 3}
\end{table}
\begin{figure}{}
  \begin{center}
    \includegraphics[scale=1,totalheight=4cm,width=0.35\textwidth ]{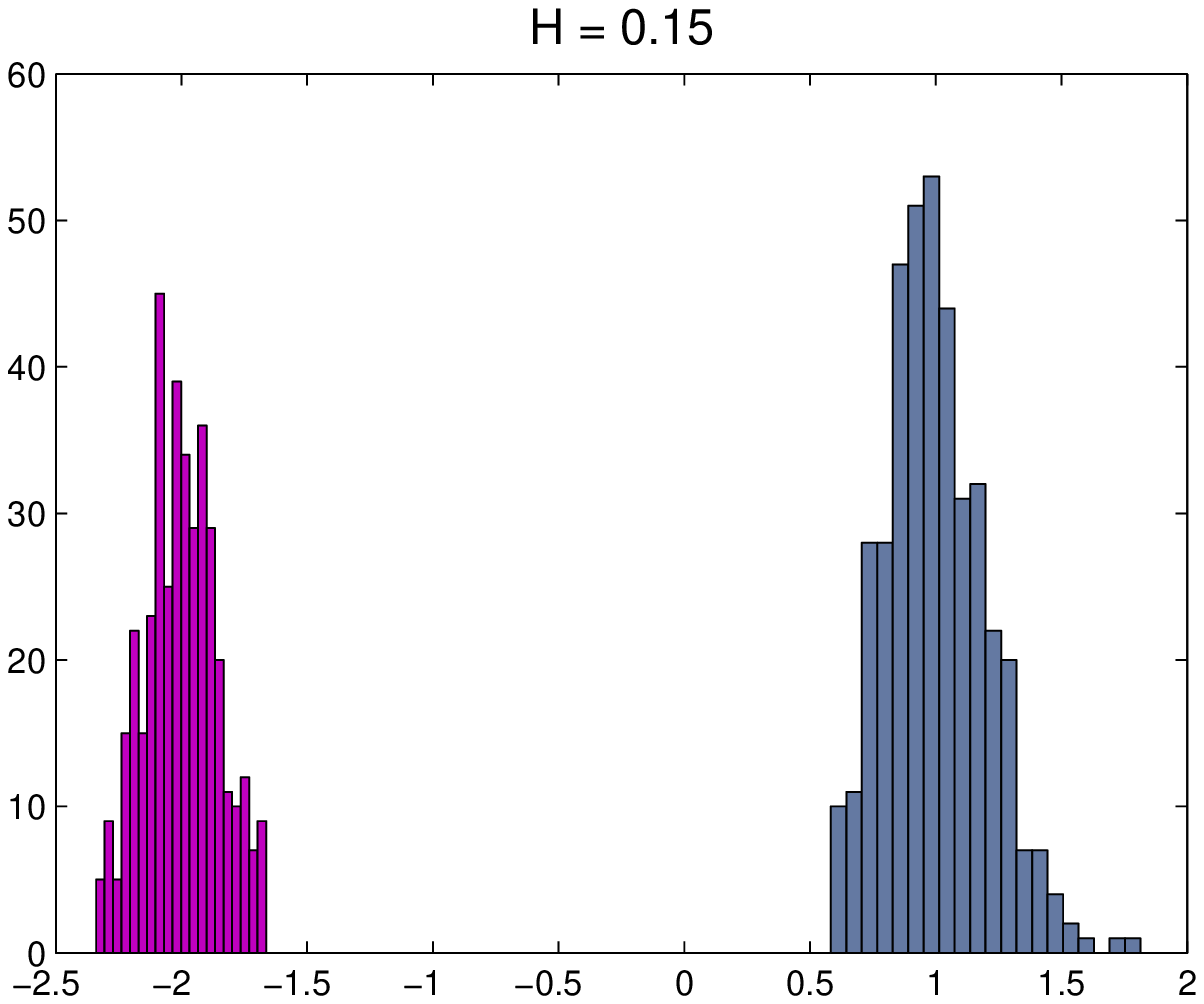}\hspace*{-0.27cm}
  \includegraphics[scale=1,totalheight=4cm,width=0.35\textwidth ]{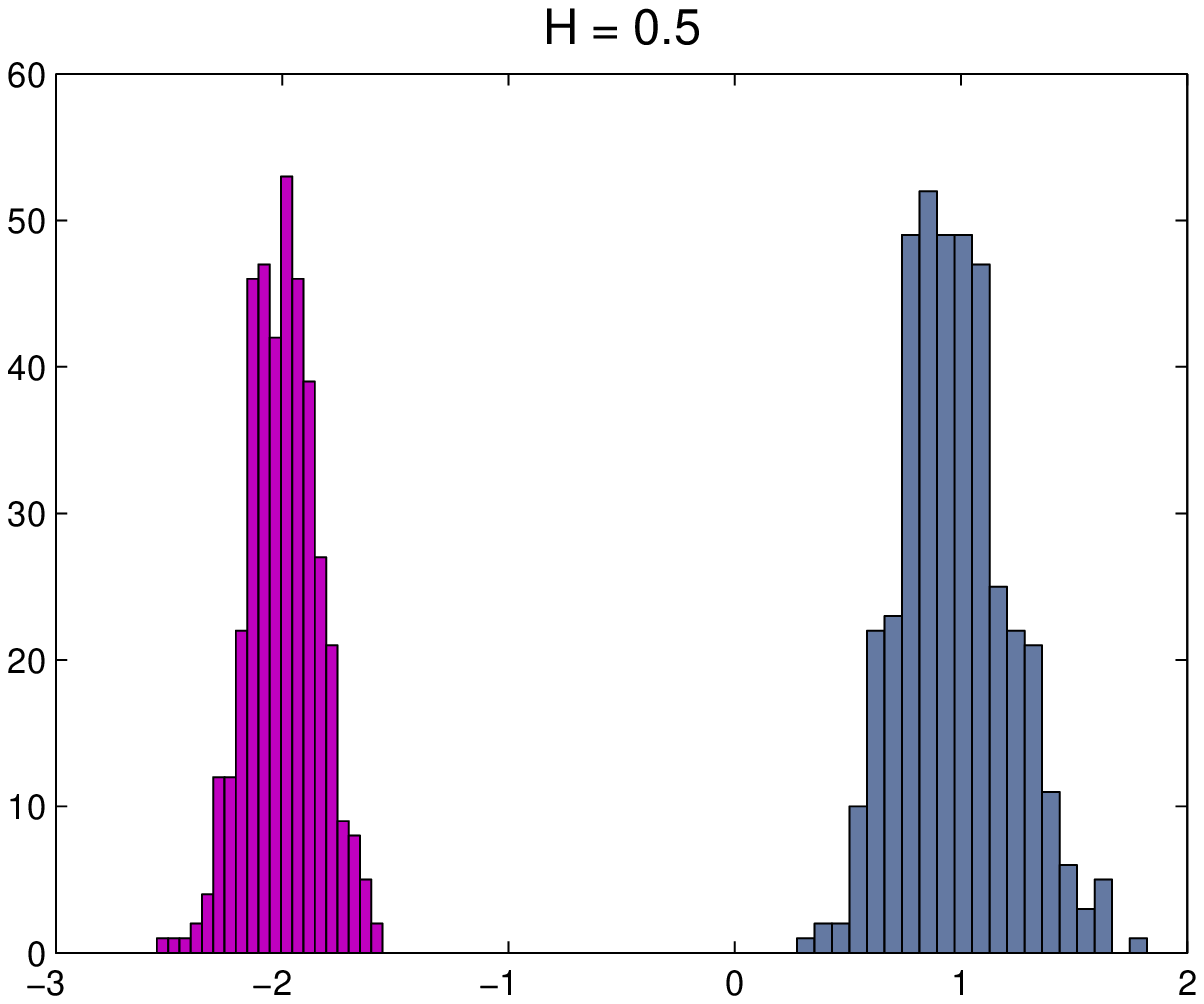}\hspace*{-0.27cm}
   \includegraphics[scale=1,totalheight=4cm ,width=0.35\textwidth ]{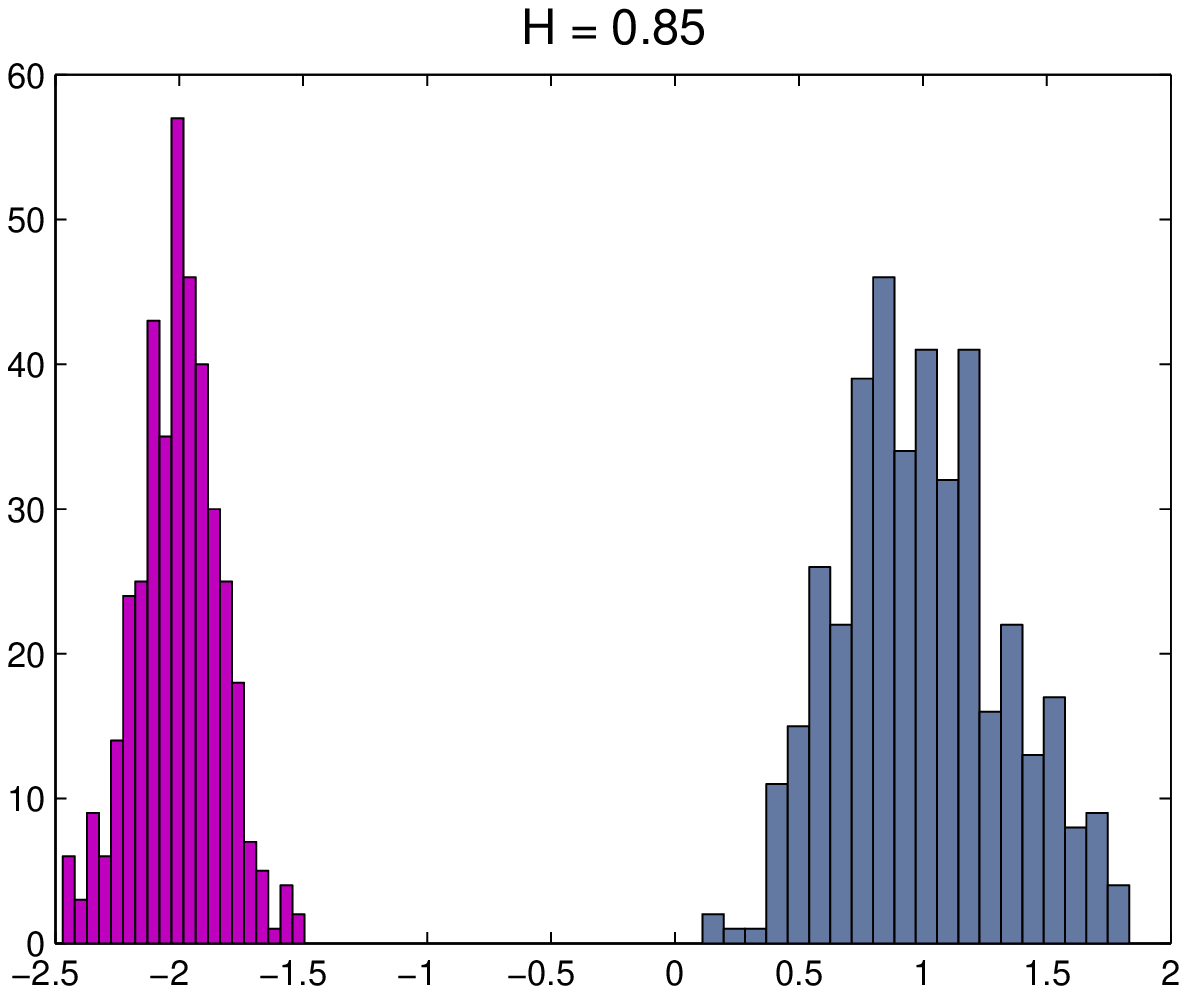}
\\
   \includegraphics[scale=1,totalheight=4cm,width=0.35\textwidth ]{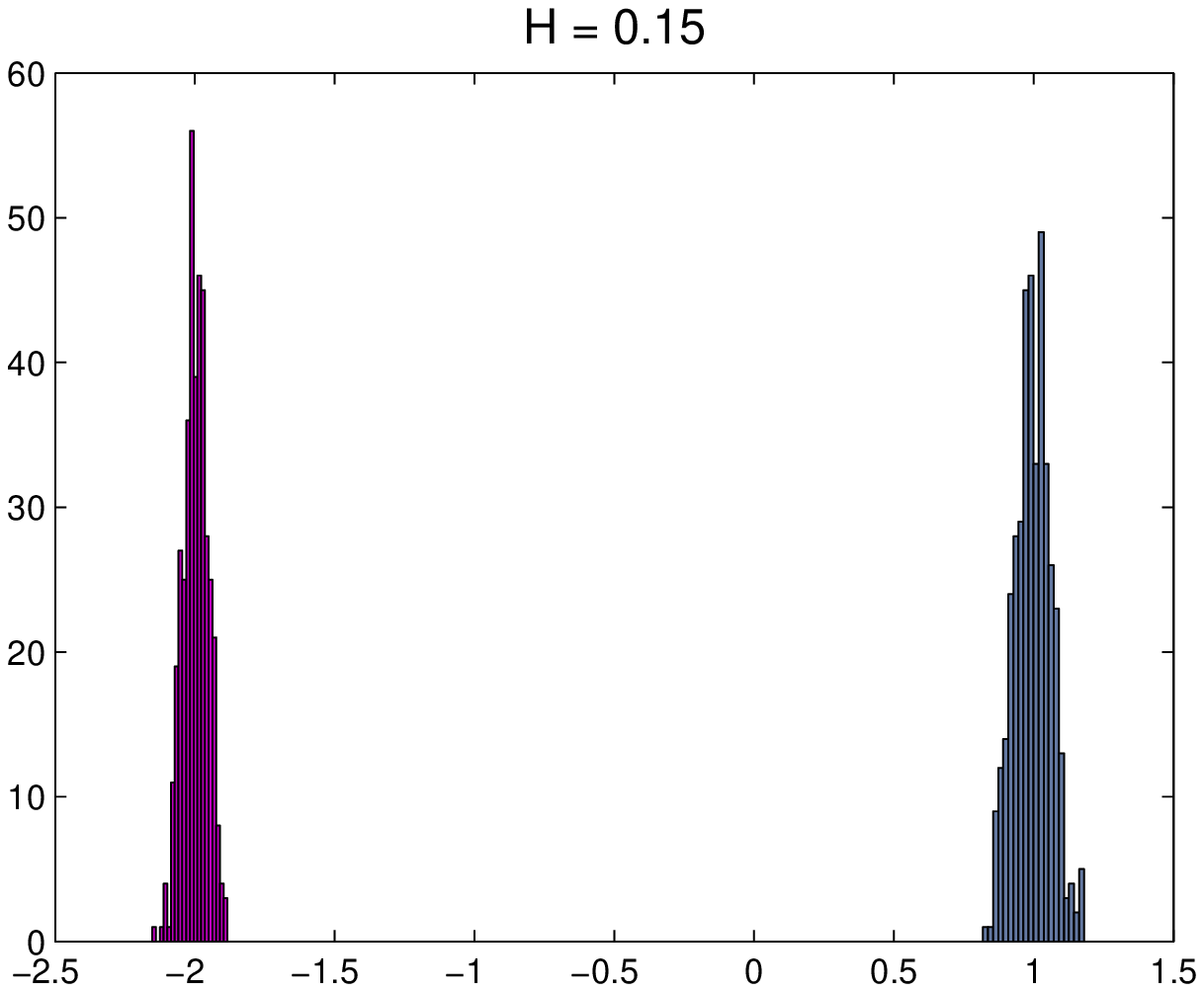}\hspace*{-0.27cm}
  \includegraphics[scale=1,totalheight=4cm,width=0.35\textwidth ]{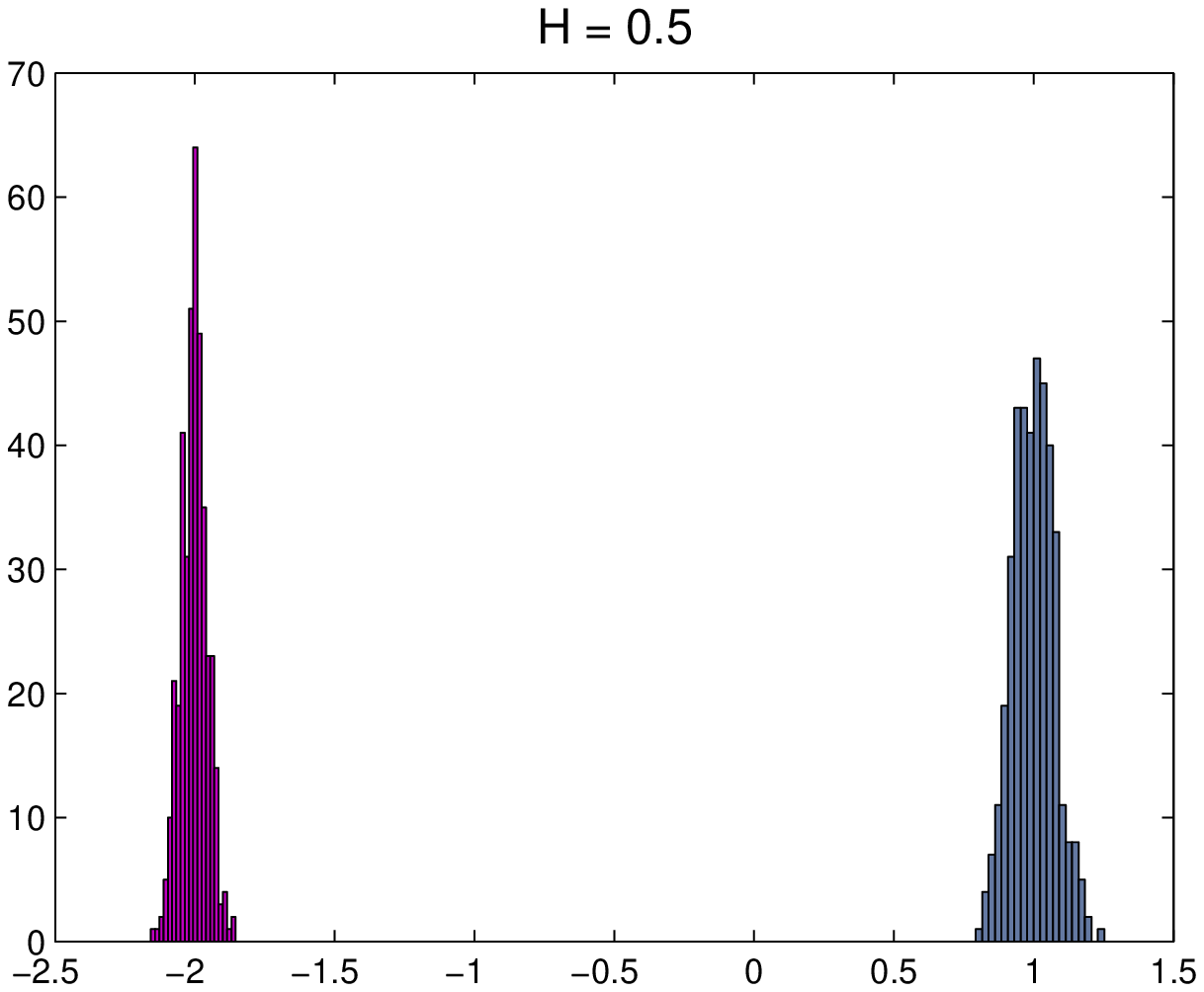}\hspace*{-0.27cm}
  \includegraphics[scale=1,totalheight=4cm ,width=0.35\textwidth ]{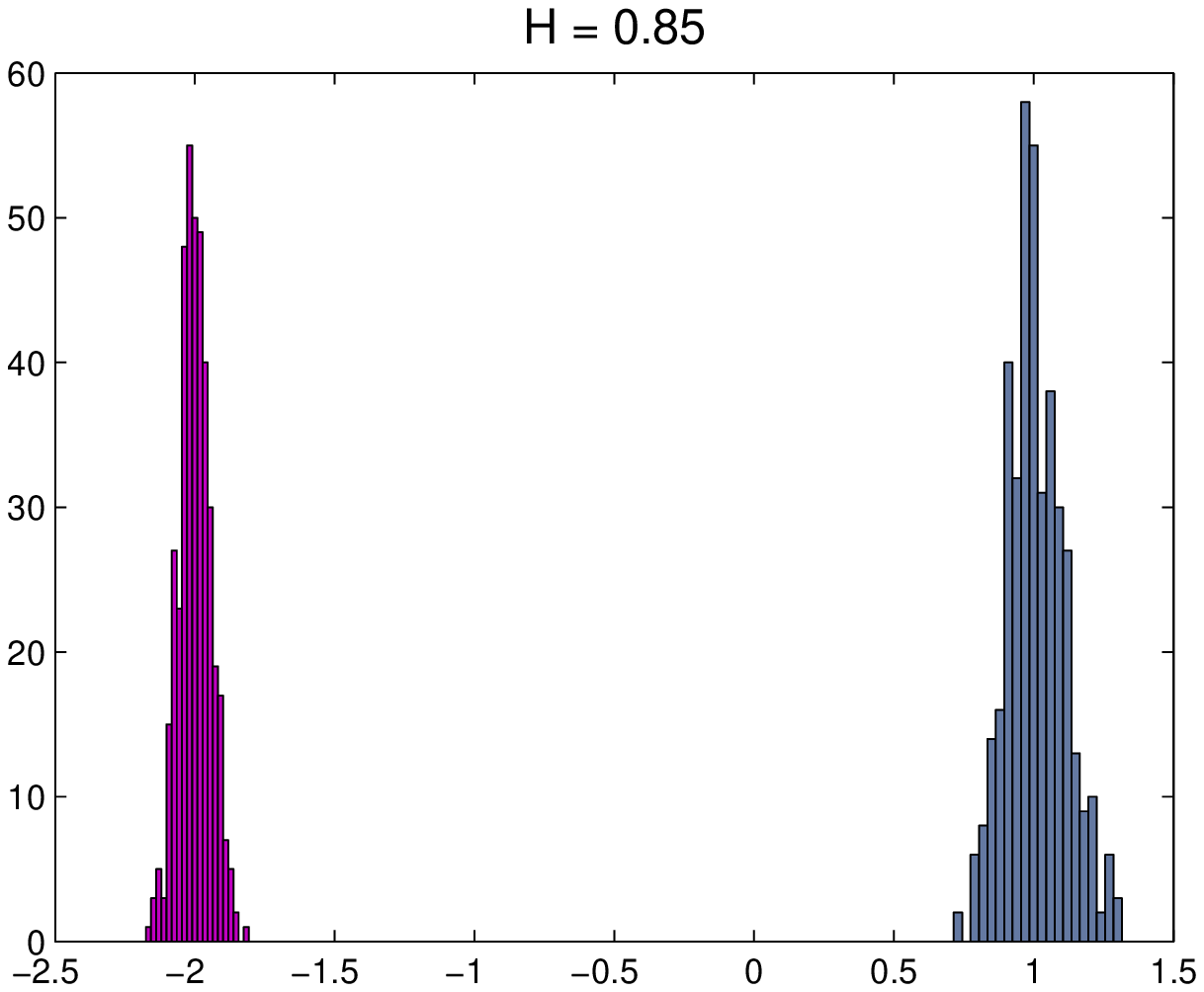}
 \end{center}
 \caption{Frequency histograms of population parameter estimates based on $400$ samples for different values of $(N,H)$. In each box of the two rows  (top $N=50$ and bottom $N=500$) histograms of $\widehat{\mu}$ (pink) and $\widehat{\sigma^2}$ (gray) are given for fixed parameters $(\mu,\sigma^2,T,\bm{n})=(-2,1,5,\bm{2^8})$. (For interpretation of the references to colour in the legend of this figure, the reader is referred to the electronic version of this article).}
 \label{figure 3}
\end{figure}
\section{Concluding remarks}\label{Sec.5}
In this paper, we have provided a fully Likelihood  parametric estimation of  population parameter for specific dynamical  model described by a fractional SDE including a random effects in the drift. We are essentially concerned with the estimation of  Hurst index, as well as  with the  mean and variance estimators of the  random effect  that has a Gaussian distribution. All qualitative and  asymptotic property of the estimators are obtained when the population of subjects  becomes large.\bigskip

This study  suggests several important directions for future research. First, what are the asymptotic properties of the Maximum Likelihood estimators for $\mu$ and $\sigma^2$  when the Hurst index $H$ is unknown? Given that the model is fully parameterized,  one may wish to estimate $H$, $\mu$ and $\sigma^2$ simultaneously. The  achievement of this task is an ongoing work. Second, the present study assumes that the model is linear and the diffusion is constant and equals  1. This assumption is not verified  in almost all real applications. So to overcome this issue, one can use for example, Euler schemes approximation. However, it is not clear how to get an explicit approximation for the Maximum Likelihood function. Such  extension would be worth being studied from both theoretical and application points of view. Third, as mentioned previously, we may estimate the population parameters by using double asymptotic framework. Such an idea is considered in an ongoing work for a more general model and in the nonparametric estimation context.\bigskip

\paragraph*{acknowledgements}
The anonymous referees and the editors are acknowledged for their constructive comments and suggestions, which have greatly improved the paper. The three  authors acknowledge the financial support of the Moroccan-German Scientific and Technics Programme; Project: {\bf PMARS III} 060/2015.
%
%

\end{document}